\newtheorem{thm}{\bf Theorem}[section]
\newtheorem{coro}[thm]{\bf Corollary}
\newtheorem{lem}[thm]{\bf Lemma}
\newtheorem{prop}[thm]{\bf Proposition}
\newtheorem{athm}{\bf Theorem}
\theoremstyle{definition}
\newtheorem{defn}[thm]{\bf Definition}
\newtheorem{notation}[thm]{\bf Notation}
\newtheorem{terminology}[thm]{\bf Terminology}
\newtheorem{eg}[thm]{\bf Example}
\newtheorem{construction}[thm]{\bf Construction}
\theoremstyle{remark}
\newtheorem{rmk}[thm]{\bf Remark}
\newcommand{\longhookrightarrow}{\ensuremath{\lhook\joinrel\longrightarrow}}
\newcommand{\rmod}[1]{\ensuremath{#1\text{-}\mathrm{Mod}}}
\newcommand{\mbar}{\ensuremath{\,\overline{\! M}}}
\newcommand{\mhat}{\ensuremath{\,\widehat{\! M}}}
\newcommand{\cB}{\mathcal{B}}
\newcommand{\cI}{\mathcal{I}}
\newcommand{\cM}{\mathcal{M}}
\newcommand{\cN}{\mathcal{N}}
\newcommand{\cT}{\mathcal{T}}
\newcommand{\cX}{\mathcal{X}}
\newcommand{\bC}{\mathbb{C}}
\newcommand{\bQ}{\mathbb{Q}}
\newcommand{\bR}{\mathbb{R}}
\newcommand{\bZ}{\mathbb{Z}}
\begin{document}
\title{Homology stability for asymptotic monopole moduli spaces}
\author{Martin Palmer$^{1}$ and Ulrike Tillmann$^{2}$}
\address{$^{1}$Institutul de Matematică Simion Stoilow al Academiei Române, 21 Calea Griviței, Bucharest, Romania \\ $^{1}$ORCID: 0000-0002-1449-5767 \\ $^{2}$Mathematical Institute, University of Oxford, Andrew Wiles Building, Oxford, OX2 6GG, UK \\ $^{2}$Isaac Newton Institute, University of Cambridge, Cambridge, CB3 0EH, UK \\ $^{2}$ORCID: 0000-0002-8076-7660}
\subject{58D27, 55R80, 53C07, 81T13}
\keywords{Configuration spaces, magnetic monopole moduli spaces, homological stability.}
\corres{Ulrike Tillmann \\ \email{tillmann@maths.ox.ac.uk}}

\begin{abstract}
We prove homological stability for two different flavours of asymptotic monopole moduli spaces, namely moduli spaces of \emph{framed Dirac monopoles} and moduli spaces of \emph{ideal monopoles}. The former are Gibbons-Manton torus bundles over configuration spaces whereas the latter are obtained from them by replacing each circle factor of the fibre with a monopole moduli space by the Borel construction. They form boundary hypersurfaces in a partial compactification of the classical monopole moduli spaces. Our results follow from a general homological stability result for configuration spaces equipped with non-local data.
\end{abstract}

\begin{fmtext}

\section*{Introduction}

The topology of the moduli spaces of magnetic monopoles $\cM_k$ has been the subject of intensive study for many decades. By a theorem of Donaldson~\cite{Donaldson1984Nahmsequationsclassification}, they have a model as spaces of rational functions on $\bC P^1$. Via this model, their homotopy and homology groups are known to stabilise as $k\to\infty$ by a theorem of Segal~\cite{Segal1979topologyofspaces} and their homology (both stable and unstable) was completely computed by \cite{CohenCohenMannMilgram1991topologyrationalfunctions} in terms of the homology of the braid groups, which is completely known~\cite{CohenLadaMay1976}.

The moduli spaces $\cM_k$ are non-compact manifolds. Recently, a partial compactification of $\cM_k$ has been constructed by Kottke and Singer~\cite{KottkeSinger2022} by adding certain boundary hypersurfaces $\cI_\lambda$ to $\cM_k$ indexed by partitions $\lambda = (k_1,\ldots,k_r)$ of $k$.

\end{fmtext}

\maketitle

Points in these boundary hypersurfaces are thought of as ``ideal'' monopoles of total charge $k$, with $r$ ``clusters'' centred at different points in $\bR^3$, with charges $k_1,\ldots,k_r$, which are ``widely separated'' but nevertheless interact.

Our main theorem proves a homology stability result for these ideal monopole moduli spaces as the number of clusters of a fixed charge $c\geq 1$ goes to infinity:

\begin{athm}
\label{athm:ideal-monopoles}
Fix a positive integer $c$ and a tuple $\lambda = (k_1,\ldots,k_r)$, of fixed length $r$, of positive integers $k_i \neq c$. Write $\lambda[n]_c = (k_1,\ldots,k_r,c,\ldots,c)$, where $c$ appears $n$ times. There are natural stabilisation maps
\begin{equation}
\label{eq:stab-maps-ideal-monopoles}
\cI_{\lambda[n]_c} \longrightarrow \cI_{\lambda[n+1]_c}
\end{equation}
that induce isomorphisms on homology in all degrees $\leq n/2 - 1$ with $\bZ$ coefficients and in all degrees $\leq n/2$ with field coefficients.
\end{athm}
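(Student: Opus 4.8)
The plan is to realise each ideal monopole moduli space $\cI_{\lambda[n]_c}$ as a bundle over a configuration space of cluster centres and to deduce the statement from the general homological stability theorem for configuration spaces equipped with non-local data. Concretely, I would first record the fibration
\[
F_n \longrightarrow \cI_{\lambda[n]_c} \longrightarrow C_n,
\]
where $C_n$ is the configuration space in $\bR^3$ of $r+n$ cluster centres, the $r$ centres of charges $k_1,\dots,k_r$ being labelled distinctly and the $n$ centres of charge $c$ being interchangeable, and where the fibre $F_n$ is the Gibbons--Manton torus $T^{r+n}$ with each circle factor replaced, via the Borel construction, by the corresponding monopole moduli space $\cM_{k_i}$ (respectively $\cM_c$ for each charge-$c$ cluster). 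The stabilisation map \eqref{eq:stab-maps-ideal-monopoles} then covers the configuration-space stabilisation $C_n \to C_{n+1}$ that inserts a new charge-$c$ centre near infinity, simultaneously adjoining one further $\cM_c$-Borel factor to the fibre.

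With this description in hand, I would identify the family $(\cI_{\lambda[n]_c})_n$ as an instance of a configuration space with non-local data: the base is the stabilising family of configuration spaces $C_n$ with labels in the charge-$c$ monopole moduli space, and the non-local data is precisely the Gibbons--Manton torus bundle, whose fibre over a configuration depends on the relative positions of all clusters through the asymptotic interaction potentials. The key verification is that the stabilisation map is compatible with this non-local structure --- that inserting a cluster near infinity extends the Gibbons--Manton connection in the prescribed way and leaves the data attached to the pre-existing clusters unchanged in the stable range. Granting this, the general theorem applies and yields homology isomorphisms in the asserted ranges: the slope-$1/2$ range is inherited from configuration-space stability in the open manifold $\bR^3$, the field-coefficient range being $\leq n/2$ and the integral range one less, as is typical.

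The main obstacle is controlling the non-local Gibbons--Manton twisting. One must show that the induced coefficient system on the homology of the fibre is sufficiently tame --- in the language of the general theorem, that the long-range monopole interactions encoded in the torus bundle assemble into a polynomial (equivalently, stably constant) coefficient system --- so that the Serre spectral sequences for consecutive values of $n$ can be compared and shown to agree in the stable range. This is the step where the specific geometry of the Gibbons--Manton metric, rather than formal configuration-space input, enters; by contrast the Borel construction contributes only the fixed finite-type homology of $\cM_c$ fibred over $BS^1$ and so does not degrade the range. Once the coefficient system is established to be polynomial of the appropriate degree, the quantitative ranges follow directly from the general theorem.
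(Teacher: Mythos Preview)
Your proposal is correct and follows essentially the same strategy as the paper: realise $\cI_{\lambda[n]_c}$ as a bundle over the partially-symmetrised configuration space $F_{r+n}(\bR^3)/\Sigma_{\lambda[n]_c}$, lift the classical stabilisation maps to these bundles, and apply the general homological stability result for configurations with non-local data (Proposition~\ref{prop:hs-with-non-local-data}), first in the case $\lambda=()$ and then for general $\lambda$ via a further fibration over $C_\lambda(\bR^3)$.

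One correction worth noting: the ``main obstacle'' you identify is not resolved by any geometry of the Gibbons--Manton metric. The verification that the coefficient system is polynomial is a purely topological computation with the characteristic classes $\alpha_{ij}\in H^2(F_r(\bR^3);\bZ)$: one checks (Lemma~\ref{lem:pullback-GM-circle-factors}) that pulling the Gibbons--Manton circle factor $S_{\lambda,j}$ back along the stabilisation map yields $S_{\lambda',j}$ for $j<r$ and a trivial bundle for $j=r$, simply because the map $\iota_{ij}$ becomes nullhomotopic when one index is the newly-added point near the boundary. This, together with Remark~\ref{rmk:action-on-second-cohomology} on how permutations act on the $\alpha_{ij}$, establishes both hypotheses of Proposition~\ref{prop:hs-with-non-local-data} directly; the polynomiality then comes for free from Lemma~\ref{lem:twisted-coefficient-system}, with no further input from monopole theory.
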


We also prove an analogous result for \emph{moduli spaces of framed Dirac monopoles} (in other words \emph{Gibbons-Manton torus bundles}; see \S\ref{ss:boundary-hypersurfaces} for the definitions) and, more generally, Gibbons-Manton $\mathbf{Z}$-bundles for any sequence $\mathbf{Z}$ of path-connected $S^1$-spaces; see Theorems \ref{thm:stability} and \ref{thm:GM-Zstar-stability}.

These results follow from a general homology stability result (Proposition \ref{prop:hs-with-non-local-data}) for unordered configuration spaces with \emph{non-local parameters}. Homology stability for configuration spaces whose points are labelled by elements of a fixed space $X$ is well-known; these are configuration spaces with \emph{local} parameters. However, the ideal monopole moduli spaces $\cI_\lambda$ are \emph{non-local}. The key observation in \S\ref{s:stability} is that homology stability only requires the parameters associated to a configuration to satisfy much weaker properties, which allows us to consider interesting non-local parameters. In \cite{PalmerTillmann2021}, we recently proved a different homology stability result for non-local configuration spaces, namely for \emph{configuration-section spaces}; this encouraged us to try to prove homology stability also in the context of ideal monopole moduli spaces. Proposition \ref{prop:hs-with-non-local-data} is the abstract general result that applies in our situation in the present paper. Though similar in nature, it neither is implied by nor implies the homology stability result in \cite{PalmerTillmann2021}.

\paragraph{Outline.}

We first recall some background on moduli spaces of magnetic monopoles in \S\ref{s:bg}: first on the moduli spaces themselves in \S\ref{ss:monopole-moduli-space} and then on their partial compactifications introduced by \cite{KottkeSinger2022} in \S\ref{ss:boundary-hypersurfaces}, whose boundary hypersurfaces are the ideal monopole moduli spaces. In \S\ref{s:stability} we then prove a general homology stability result for configuration spaces equipped with ``non-local'' data, deducing it from twisted homological stability for configuration spaces \cite{Palmer2018Twistedhomologicalstability} (see also \cite{Krannich2019Homologicalstabilitytopological}). In \S\ref{s:stability-monopoles} we apply it to prove our main theorem, homology stability for ideal monopole moduli spaces, as well as an extension (Theorem \ref{thm:GM-Zstar-stability}) to Gibbons-Manton $\mathbf{Z}$-bundles more generally.

\section{Monopole moduli space and boundary hypersurfaces}
\label{s:bg}

\subsection[Monopole moduli space]{Monopole moduli space.}
\label{ss:monopole-moduli-space}

We briefly recall from \cite{AtiyahHitchin1988geometrydynamicsmagnetic} some different monopole moduli spaces and the relations between them.

A \emph{magnetic monopole} on $\bR^3$ is a pair consisting of a connection $A$ on the trivial principal $SU(2)$-bundle on $\bR^3$ together with a field $\phi$ taking values in the associated Lie algebra $\mathfrak{su}(2)$. Fixing a framing, these may be viewed, respectively, as a smooth $1$-form and a smooth function on $\bR^3$ taking values in $\mathfrak{su}(2)$, which we may identify topologically as $\mathfrak{su}(2) \cong \bR^3$. These data $A$ and $\phi$ must satisfy the \emph{Bogomolny equations} and a certain finiteness condition; see \cite[pp.~14--15]{AtiyahHitchin1988geometrydynamicsmagnetic} for details. This finiteness condition implies that $\phi(x) \neq 0$ for $\lvert x \rvert$ sufficiently large, so the restriction of $\phi$ to $\bR^3 \smallsetminus B_R(0)$ takes values in $\mathfrak{su}(2) \smallsetminus \{0\}$ for $R \gg 0$. The degree of this map is the charge of the monopole, and is always positive. The set of all magnetic monopoles of charge $k\geq 1$, up to gauge equivalence (automorphisms of the trivial bundle $\bR^3 \times \mathfrak{su}(2) \to \bR^3$), suitably topologised, is the \emph{monopole moduli space} $\cN_k$. A slight variation of the construction, quotienting by a smaller gauge group, yields a different space $\cM_k$ related to $\cN_k$ by a principal $S^1$-bundle
\begin{equation}
\label{eq:Mk-to-Nk}
\cM_k \longrightarrow \cN_k = \cM_k / S^1.
\end{equation}
Translation of solutions to the Bogomolny equations in $\bR^3$ also defines a principal $\bR^3$-bundle
\begin{equation}
\label{eq:Nk-to-Nk0}
\cN_k \longrightarrow \cM_k^0 = \cN_k / \bR^3.
\end{equation}

The spaces $\cM_k$ and $\cM_k^0$ admit the structure of hyperKähler manifolds of dimensions $4k$ and $4k-4$ respectively. For charge $k=1$ we have $\cM_1^0 = pt$ (and $\cM_1 \cong S^1 \times \bR^3$) and for $k=2$, the $4$-manifold $\cM_2^0$ is known as the \emph{Atiyah-Hitchin manifold} and has been studied in detail in \cite{AtiyahHitchin1988geometrydynamicsmagnetic}.

By \cite{Donaldson1984Nahmsequationsclassification}, $\cM_k$ is homeomorphic to the space $R_k$ of degree-$k$ rational self-maps of $\bC P^1$ that send $\infty$ to $0$. Thus, it is also homeomorphic to the space $R'_k$ of degree-$k$ rational self-maps of $\bC P^1$ that send $\infty$ to $1$. The points of the space $R'_k$ may conveniently be described as pairs $(p,q)$ of coprime monic polynomials with coefficients in $\bC$, both of degree $k$. Identifying these polynomials with their sets of roots, we obtain a natural embedding
\[
R'_k \longhookrightarrow SP^k(\bC) \times SP^k(\bC)
\]
whose image consists of all pairs $(A,B)$ of multi-subsets of $\bC$ that are disjoint. On the other hand, the space $R_k$ is convenient in that the circle action is easy to see: under the isomorphism $\cM_k \cong R_k$, the circle action is given simply by multiplying rational self-maps of $\bC P^1$ by $e^{i \theta}$.

The fundamental group of $\cM_k$ is $\bZ$, by \cite[Proposition~6.4]{Segal1979topologyofspaces}. Also, by \cite[chapter~2]{AtiyahHitchin1988geometrydynamicsmagnetic}, the fundamental group of $\cN_k$ is $\bZ/k$ and the projection map \eqref{eq:Mk-to-Nk} induces the reduction-mod-$k$ map $\bZ \twoheadrightarrow \bZ/k$. It follows from the long exact sequence that \eqref{eq:Mk-to-Nk} induces isomorphisms on $\pi_i$ for all $i\geq 2$, so $\cM_k$ and $\cN_k$ have the same universal cover, up to homotopy equivalence, which is denoted by $\cX_k$.

There are stabilisation maps $\cM_k \to \cM_{k+1}$, which may be defined under the isomorphism $\cM_k \cong R_k$ by adding to a given rational self-map a new zero and a new pole ``far away'' from the origin. (This is not invariant under the circle action, so it does not descend to a stabilisation map on the moduli spaces $\cN_k$.) The stabilisation maps $\cM_k \to \cM_{k+1}$ induce isomorphisms on homotopy groups (and hence also homology groups) in a stable range, by \cite{Segal1979topologyofspaces}. Lifting to universal covers, it follows that there are also stabilisation maps $\cX_k \to \cX_{k+1}$ that induce isomorphisms on homotopy (and homology) groups in a stable range.

By the main theorem of \cite{Segal1979topologyofspaces}, the homotopy colimit of the stabilisation maps $\cM_k \to \cM_{k+1} \to \cdots$ is weakly equivalent to $\Omega_0^2 S^2$. Thus the stable homology of $\cM_k$ is the homology of $\Omega_0^2 S^2$ and the stable homology of $\cX_k$ is the homology of the universal cover of $\Omega_0^2 S^2$. Moreover, the \emph{unstable} homology of $\cM_k$ (i.e.~its homology outside of the stable range) is also known: by the main result of \cite{CohenCohenMannMilgram1991topologyrationalfunctions, CohenCohenMannMilgram1993homotopytyperational}, the homology of $\cM_k$ is isomorphic to the group homology of the braid group $B_{2k}$, which is completely computed \cite{CohenLadaMay1976}. The \emph{rational} unstable homology of $\cX_k$ has also been computed by \cite{SegalSelby1996}, and is significantly more complicated than the rational unstable homology of $\cM_k$: the rational homology $H_*(\cM_k;\bQ)$ is the same as that of the circle, so it has total dimension $2$, whereas \cite{SegalSelby1996} shows that the rational homology $H_*(\cX_k;\bQ)$ has total dimension $k$, concentrated in degrees of the form $2(k-d)$ where $d$ is a divisor of $k$.

\begin{notation}
\label{notation:Mkc}
The principal bundles \eqref{eq:Mk-to-Nk} and \eqref{eq:Nk-to-Nk0} arise from a principal (in particular free) action of the product $S^1 \times \bR^3$ on on $\cM_k$. If we first quotient by $\bR^3$ (Euclidean translations) we obtain a principal $\bR^3$-bundle
\begin{equation}
\label{eq:Mk-to-Mkc}
\cM_k \longrightarrow \cM_k^c = \cM_k / \bR^3 .
\end{equation}
In particular, we have a homotopy equivalence $\cM_k^c \simeq \cM_k$. (The superscript $c$ stands for \emph{centred} monopoles.) The quotient $\cM_k^c$ is a $(4k-3)$-dimensional manifold and there is a principal $S^1$-bundle
\begin{equation}
\label{eq:Mkc-to-Mk0}
\cM_k^c \longrightarrow \cM_k^0 = \cM_k^c / S^1 = \cN_k / \bR^3 .
\end{equation}
\end{notation}

\subsection[Boundary hypersurfaces]{Boundary hypersurfaces.}
\label{ss:boundary-hypersurfaces}

Kottke and Singer \cite{KottkeSinger2022} have constructed a partial compactification of $\cM_k^c \simeq \cM_k$ of the form
\begin{equation}
\label{eq:partial-compactification}
\overline{\cM}_k^c = \bigsqcup_\lambda \cI^c_\lambda
\end{equation}
with strata indexed by sequences $\lambda = (k_1,\ldots,k_r)$ of positive integers that sum to $k$. The stratum $\cI^c_{(k)}$ is the interior $\cM^c_k$ of $\overline{\cM}^c_k$ and the union of all strata $\cI^c_\lambda$ for $\lambda \neq (k)$ is the boundary of $\overline{\cM}^c_k$. Points in $\cI^c_\lambda$ are called \emph{centred ideal} monopoles associated to the partition $\lambda$.

We will not recall here the construction of $\cI^c_\lambda$ in \cite{KottkeSinger2022}; instead we will take an alternative characterisation of $\cI^c_\lambda$ to be its definition (see Definitions \ref{def:ideal} and \ref{def:Dirac} and Remark \ref{rmk:definitions}). To begin with, we recall the definitions of ordered and unordered configuration spaces.

\begin{defn}
\label{def:configuration-spaces}
For any space $M$, let us write $F_r(M) = \{ (v_1,\ldots,v_r) \in M^r \mid v_i \neq v_j \text{ for } i\neq j \}$ for the \emph{ordered configuration space} of $r$ points in $M$, topologised as a subspace of the product $M^r$. We also write $C_r(M) = F_r(M) / \Sigma_r$ for the \emph{unordered configuration space} of $r$ points in $M$.
\end{defn}

Recall (see for example \cite[Theorem V.1.1]{FadellHusseini2001}) that the degree-$(d-1)$ cohomology of $F_r(\bR^d)$ is given by:
\begin{equation}
\label{eq:cohomology-basis}
H^{d-1}(F_r(\bR^d);\bZ) \;\cong\; \bZ \bigl\lbrace \alpha_{ij} \mid 1\leq i < j \leq r \bigr\rbrace ,
\end{equation}
where $\alpha_{ij}$ is the pullback of a generator of $H^{d-1}(S^{d-1};\bZ)$ along the map $\iota_{ij} \colon F_r(\bR^d) \to S^{d-1}$ given by the formula
\[
\mathbf{x} = (x_1, \dots, x_r) \longmapsto \frac{x_i-x_j}{\lvert x_i - x_j \rvert}.
\]

Since principal $S^1$-bundles over a space $X$ are classified by $H^2(X;\bZ)$, this means that principal $S^1$-bundles over $F_r(\bR^3)$ are classified by integer linear combinations of the $\alpha_{ij}$.
(One dimension lower, the same data classifies principal $\bZ$-bundles over $F_r(\bR^2)$, in other words regular coverings of $F_r(\bR^2)$ with infinite cyclic deck transformation group.)

\begin{defn}[{\cite[Definition 4.6 and the paragraph preceding it]{KottkeSinger2022}}]
\label{def:GM-bundles}
For a sequence of integers $\lambda = (k_1,\ldots,k_r)$, the corresponding \emph{Gibbons-Manton circle factors} are the principal $S^1$-bundles
\[
S_{\lambda,j} \longrightarrow F_r(\bR^3),
\]
for $j \in \{1,\ldots,r\}$, corresponding to the element $\sum_{i \in \{1,\ldots,r\}, i \neq j} k_i.\alpha_{ij}$, where we define $\alpha_{ij} = -\alpha_{ji}$ if $i>j$. The \emph{Gibbons-Manton torus bundle} weighted by $\lambda$ is the principal $T^r$-bundle
\begin{equation}
\label{eq:GM-bundle}
\widetilde{\cT}_\lambda = \bigoplus_{j=1}^r S_{\lambda,j} \longrightarrow F_r(\bR^3).
\end{equation}
A point in $S_{\lambda,j}$ may be thought of as an ordered configuration together with a non-local circle parameter encoding the interaction of the $j$th particle with all other particles, weighted by $\lambda$. A point in $\widetilde{\cT}_\lambda$ may similarly be thought of as an ordered configuration together with $r$ non-local circle parameters, each encoding the interaction of one of the particles with all of the others (again, weighted by $\lambda$).
\end{defn}

\begin{defn}
\label{def:Gibbons-Manton-config-space}
The symmetric group $\Sigma_r$ acts on $F_r(\bR^3)$ by permuting the particles. Let $\Sigma_\lambda \leq \Sigma_r$ be the stabiliser of $\lambda = (k_1,\ldots,k_r) \in \bZ^r$ under the obvious permutation action of $\Sigma_r$ on $\bZ^r$. Then the action of $\Sigma_\lambda$ on $F_r(\bR^3)$ lifts to a well-defined action on $\widetilde{\cT}_\lambda$. The \emph{Gibbons-Manton configuration space} is the quotient space $\cT_\lambda = \widetilde{\cT}_\lambda / \Sigma_\lambda$. Note that there is a principal $T^r$-bundle
\begin{equation}
\label{eq:GM-bundle-2}
\cT_\lambda \longrightarrow F_r(\bR^3) / \Sigma_\lambda .
\end{equation}
In particular, when $k_1 = k_2 = \cdots = k_r$, we have $\Sigma_\lambda = \Sigma_r$ and $\cT_\lambda$ is a principal $T^r$-bundle over the unordered configuration space $C_r(\bR^3)$.
\end{defn}

\begin{rmk}
One may make analogous definitions for Euclidean spaces $\bR^d$ in general, replacing $S^1 = K(\bZ,1)$ with $K(\bZ,d-2)$, so that $\cT_\lambda$ is a principal $K(\bZ,d-2)^r$-bundle over $F_r(\bR^d)$. For example, when $d=2$, it is a regular covering space with deck transformation group isomorphic to $\bZ^r$. In particular, for $d=2$ and $\lambda = (1,1,\ldots,1)$, it is the regular covering space corresponding to the homomorphism
\[
\varphi_r \colon \pi_1(F_r(\bR^2)) = PB_r \longrightarrow \bZ^r
\]
that records, for each $1\leq i\leq r$, the total winding number of the $i$th strand of a given pure braid around the other $r-1$ strands. This is a disconnected covering with components indexed by $\mathrm{coker}(\varphi_r)$; each connected component is a classifying space for the subgroup $\mathrm{ker}(\varphi_r) \leq PB_r$ consisting of those pure braids $b$ where each strand of $b$ has zero total winding number around the other $r-1$ strands:
\[
\bigsqcup_{\mathrm{coker}(\varphi_r)} B(\mathrm{ker}(\varphi_r)) \longrightarrow F_r(\bR^2).
\]
\end{rmk}

\begin{defn}
\label{def:ideal}
The \emph{moduli space of ideal monopoles} of weight $\lambda$ is defined as follows. Recall that the monopole moduli space $\cM_k$ is equipped with a circle action. The product $\cM_{k_1} \times \cdots \times \cM_{k_r}$ is therefore equipped with an action of the torus $T^r$. We define $\widetilde{\cI}_\lambda$ to be the total space of the fibre bundle associated to the principal $T^r$-bundle $\widetilde{\cT}_\lambda$ by changing the fibre to $\cM_{k_1} \times \cdots \times \cM_{k_r}$. In other words, it is the Borel construction
\[
\widetilde{\cI}_\lambda = \widetilde{\cT}_\lambda \times_{T^r} \bigl( \cM_{k_1} \times \cdots \times \cM_{k_r} \bigr) \longrightarrow F_r(\bR^3).
\]
We then define $\cI_\lambda = \widetilde{\cI}_\lambda / \Sigma_\lambda$, where $\Sigma_\lambda$ acts diagonally on $\widetilde{\cT}_\lambda$ (see Definition \ref{def:Gibbons-Manton-config-space}) and on the product $\cM_{k_1} \times \cdots \times \cM_{k_r}$. The \emph{moduli space of ideal monopoles} of weight $\lambda$ is this space $\cI_\lambda$. It is the total space of a fibre bundle
\begin{equation}
\label{eq:ideal-monopole-bundle}
\pi \colon \cI_\lambda \longrightarrow F_r(\bR^3) / \Sigma_\lambda
\end{equation}
with fibre $\cM_{k_1} \times \cdots \times \cM_{k_r}$.
\end{defn}

\begin{rmk}
This is not yet the boundary stratum $\cI^c_\lambda$ constructed by \cite{KottkeSinger2022} in their partial compactification of $\cM_k^c$, since it has the wrong dimension. Recall that the dimension of $\cM_k^c$ is $4k-3$, so its boundary strata must have dimension $4k-4$, whereas the dimension of $\cI_\lambda$ is $4k + 3r$. The definition of $\cI^c_\lambda$ is similar to that of $\cI_\lambda$ (and these two spaces are homotopy equivalent; see Remark \ref{rmk:I-and-Ic}), using the centred moduli spaces $\cM^c_{k_i}$ instead of $\cM_{k_i}$ and using a centred version of the configuration space, which we define next.
\end{rmk}

\begin{defn}
\label{defn:ordered-centred}
The ordered centred configuration space $F_r^c(\bR^3) \subseteq F_r(\bR^3)$ is defined to be the space of all ordered configurations $(x_1,\ldots,x_r)$ in $F_r(\bR^3)$ such that
\begin{equation}
\label{eq:centred-config-space-condition}
\sum_{i=1}^r x_i = 0 \qquad\text{and}\qquad \sum_{i=1}^r \lvert x_i \rvert^2 = 1
\end{equation}
and has dimension $3r-4$. The unordered version $C_r^c(\bR^3) \subseteq C_r(\bR^3)$ is defined similarly and we have $C_r^c(\bR^3) = F_r^c(\bR^3) / \Sigma_r$.
\end{defn}

\begin{defn}
\label{def:ideal-centred}
The \emph{moduli space of centred ideal monopoles} of weight $\lambda$ is defined as follows. Analogously to Definition \ref{def:ideal}, consider the Borel construction
\[
\widetilde{\cI}^c_\lambda = \widetilde{\cT}^c_\lambda \times_{T^r} \bigl( \cM^c_{k_1} \times \cdots \times \cM^c_{k_r} \bigr) \longrightarrow F^c_r(\bR^3),
\]
where $\widetilde{\cT}^c_\lambda$ is the restriction of $\widetilde{\cT}_\lambda \to F_r(\bR^3)$ to $F_r^c(\bR^3) \subseteq F_r(\bR^3)$. We then define $\cI^c_\lambda = \widetilde{\cI}^c_\lambda / \Sigma_\lambda$, which is the total space of a fibre bundle
\begin{equation}
\label{eq:ideal-monopole-bundle-centred}
\pi \colon \cI^c_\lambda \longrightarrow F^c_r(\bR^3) / \Sigma_\lambda
\end{equation}
with fibre $\cM^c_{k_1} \times \cdots \times \cM^c_{k_r}$.
\end{defn}

\begin{rmk}
\label{rmk:I-and-Ic}
Since the inclusion $F_r^c(\bR^3) \subseteq F_r(\bR^3)$ and the projection \eqref{eq:Mk-to-Mkc} are homotopy equivalences, we also have
\[
\cI^c_\lambda \simeq \cI_\lambda .
\]
They are therefore interchangeable when studying their homotopical properties individually. However, they are not homeomorphic, and $\cI^c_\lambda$ (rather than $\cI_\lambda$) is the boundary stratum corresponding to $\lambda$ in the partial compactification of \cite{KottkeSinger2022}. Note that the space $\cI^c_\lambda$ has the correct dimension, namely $(3r-4) + \sum_{i=1}^r (4k_i - 3) = 3r-4+4k-3r = 4k-4$.

However, since we focus in this paper on the homological properties of $\cI_\lambda$, the difference between $\cI_\lambda$ and $\cI^c_\lambda$ will not be relevant to us.
\end{rmk}

\begin{terminology}
\label{widely-separated}
When $\lambda = (1,1,\ldots,1)$, the moduli space $\cI_\lambda$ is called the \emph{moduli space of widely separated magnetic monopoles}. This terminology follows the intuition that points $x \in \cI_\lambda$ should be thought of as monopoles of total charge $k$, with $r$ different ``clusters'' centred at the points $\pi(x)$, with charges $k_i$, which are ``widely separated'' but nevertheless interact: these interactions are encoded in the structure group $T^r$ of the bundle \eqref{eq:ideal-monopole-bundle}.
\end{terminology}

\begin{defn}
\label{def:Dirac}
The \emph{moduli space of framed Dirac monopoles} of weight $\lambda$ is the Gibbons-Manton configuration space $\cT_\lambda$ of Definition \ref{def:Gibbons-Manton-config-space}, which has the total space of the Gibbons-Manton torus bundle \eqref{eq:GM-bundle} as a finite covering.
\end{defn}

\begin{rmk}[\emph{On definitions.}]
\label{rmk:definitions}
Definitions \ref{def:ideal} and \ref{def:Dirac} are not precisely the definitions given in \cite{KottkeSinger2022}. By \cite[Theorem~4.9]{KottkeSinger2022}, the moduli space of ideal monopoles of weight $\lambda$ -- according to their definition -- is equivalent to the space denoted by $\widetilde{\cI}_\lambda$ in Definition \ref{def:ideal}. However, as pointed out in \cite{KottkeSinger2022} (see the Remark on page 53), this is not the correct space to form the boundary hypersurfaces of the compactification $\overline{\cM}_k$ of $\cM_k$, and one should instead pass to the quotient space $\cI_\lambda = \widetilde{\cI}_\lambda / \Sigma_\lambda$. We have therefore made this replacement in Definition \ref{def:ideal}. (The difference between $\cI_\lambda$ and its finite covering space $\widetilde{\cI}_\lambda$ is not significant in \cite{KottkeSinger2022} since they are interested primarily in studying the geometry of these spaces \emph{locally}.)
Similarly, by \cite[Proposition~4.8]{KottkeSinger2022}, the moduli space of framed Dirac monopoles of weight $\lambda$ -- according to their definition -- is equivalent to the total space $\widetilde{\cT}_\lambda$ of the Gibbons-Manton torus bundle \eqref{eq:GM-bundle}. For the same reasons as above, we instead consider the moduli space of framed Dirac monopoles to be the quotient space $\cT_\lambda = \widetilde{\cT}_\lambda / \Sigma_\lambda$ (Definition \ref{def:Dirac}).
Henceforth, we treat Definitions \ref{def:ideal} and \ref{def:Dirac} as the \emph{definitions} of the ideal and framed Dirac monopole moduli spaces respectively.
\end{rmk}

\begin{rmk}
Another small difference between our definition and that of \cite{KottkeSinger2022} concerns the action of the symmetric group $\Sigma_\lambda$. In \cite{KottkeSinger2022}, the ordered centred configuration spaces (cf.~Definition~\ref{defn:ordered-centred}) are defined in a slightly asymmetric way, which does not allow for taking a quotient by $\Sigma_\lambda$ (as we do above), since they single out one point of the configuration to lie at $0 \in \bR^3$. We have modified the definition to be more symmetric by instead requiring the centre of mass to lie at $0$. This does not change the homeomorphism type of the centred ordered configuration space and it has the advantage of having a natural action of the full symmetric group $\Sigma_r$, not just $\Sigma_{r-1}$.
\end{rmk}

\begin{rmk}
When $k=1$, the monopole moduli space $\cM_1^c$, as an $S^1$-space, is simply $S^1$ itself. Thus, according to Definition \ref{def:ideal}, we have $\widetilde{\cI}_{(1,\ldots,1)} = \widetilde{\cT}_{(1,\ldots,1)}$. The moduli space of widely separated magnetic monopoles $\cI_{(1,\ldots,1)}$ (cf.~Terminology \ref{widely-separated}) is therefore the quotient of the total space of the Gibbons-Manton torus bundle $\widetilde{\cT}_{(1,\ldots,1)}$ by the symmetric group $\Sigma_r$.
\end{rmk}

\begin{rmk}[\emph{Higher codimension boundary strata.}]
The space \eqref{eq:partial-compactification} is only a partial compactification of $\cM_k$: it is a manifold with boundary whose interior is $\cM_k$, but it is still non-compact. In a recent preprint \cite{fritzsch2018monopoles}, a full compactification of $\cM_k$ is proposed,\footnote{Although full details of its (recursive) construction are deferred to forthcoming work of the same authors.} which is a smooth manifold with corners that recovers the partial compactification $\overline{\cM}_k$ if one discards corners of codimension greater than $1$. It would be interesting to extend our study of the homology of $\cI_\lambda$ to the deeper boundary strata of this full compactification.
\end{rmk}

\section{Homology stability for configurations with non-local data}
\label{s:stability}

The goal of this section is to prove Proposition \ref{prop:hs-with-non-local-data}, which gives sufficient conditions that imply homology stability for configuration spaces equipped with additional (possibly ``non-local'') parameters.

Labelled configuration spaces, where each separate point of a configuration is equipped with a label taking values in a fixed space, are the most obvious examples of this setting -- we refer to these as configuration spaces with \emph{local} data, since the labels are associated to individual points of the configuration. However, the key observation of this section is that the proof of homology stability requires only weaker properties of the parameters, which are satisfied also in other interesting, \emph{non-local} settings.

In particular, in \S\ref{s:stability-monopoles} we will apply this to our key motivating example of non-local configuration spaces, Gibbons-Manton torus bundles and moduli spaces of ideal monopoles, where the parameters are genuinely non-local, encoding the pairwise interactions of the points of the configuration.

For the general setting of non-local configuration spaces, let us consider a connected manifold $\mbar$ with non-empty boundary and denote its interior by $M$. We first recall the definition of the \emph{stabilisation maps} between the ordered and unordered configuration spaces $F_n(M)$ and $C_n(M)$ (see Definition~\ref{def:configuration-spaces}).

\begin{defn}
\label{def:stabilisation-map}
Choose a collar neighbourhood of $\mbar$, in order words an open neighbourhood $U$ of $\partial\mbar$ and an identification $\varphi \colon U \cong \partial\mbar \times [0,1)$ that restricts to $\varphi(p) = (p,0)$ for $p \in \partial\mbar \subset U$. (This exists by \cite{Brown1962}.) Let $\mhat$ be the result of thickening the collar neighbourhood, i.e.~the union of $\mbar$ and $\partial\mbar \times (-1,1)$ along the identification $\varphi$. Also, choose a diffeomorphism $(-1,1) \cong (0,1)$ that restricts to the identity on $(1-\epsilon,1)$ for some $\epsilon > 0$. Taking the product with the identity on $\partial\mbar$ and extending by the identity on $M \smallsetminus U$, this determines a diffeomorphism $\theta \colon \mhat \cong M$. Finally, choose a basepoint $* \in \partial\mbar$. These choices determine a \emph{stabilisation map}
\begin{equation}
\label{eq:stabilisation-maps-ordered}
F_n(M) \longrightarrow F_{n+1}(M)
\end{equation}
between ordered configuration spaces on $M$ by adjoining the point $(*,-\frac12) \in \mhat$ to a configuration in $M$ and then applying the diffeomorphism $\theta$ to each point, i.e.~the configuration $(p_1,\ldots,p_n)$ is sent to $(\theta(p_1),\ldots,\theta(p_n),\theta((*,-\frac12)))$. This evidently respects the actions of the symmetric groups on $F_n(M)$ and on $F_{n+1}(M)$, so it also descends to a stabilisation map at the level of unordered configuration spaces:
\begin{equation}
\label{eq:stabilisation-maps-unordered}
C_n(M) \longrightarrow C_{n+1}(M),
\end{equation}
as well as intermediate quotients between ordered and unordered configuration spaces, namely:
\begin{equation}
\label{eq:stabilisation-maps-partitioned}
F_n(M)/G \longrightarrow F_{n+1}(M)/H
\end{equation}
for any subgroups $G \subseteq \Sigma_n$ and $H \subseteq \Sigma_{n+1}$ such that the natural inclusion $\Sigma_n \hookrightarrow \Sigma_{n+1}$ takes $G$ into $H$.
\end{defn}

\begin{rmk}
Up to homotopy, the stabilisation maps \eqref{eq:stabilisation-maps-ordered} and \eqref{eq:stabilisation-maps-unordered} depend only on the choice of boundary-component of $\mbar$ containing the basepoint $*$. These maps (or maps homotopic to them) were introduced in \cite[\S 4]{McDuff1975Configurationspacesof} and \cite[Appendix]{Segal1979topologyofspaces}; see also \cite[\S 4]{Randal-Williams2013Homologicalstabilityunordered} or \cite[\S 2.2]{Palmer2018Twistedhomologicalstability}.
\end{rmk}

Let us now consider the sequence
\begin{equation}
\label{eq:classical-stabilisation}
\cdots \to C_n(M) \longrightarrow C_{n+1}(M) \to \cdots
\end{equation}
given by the stabilisation maps \eqref{eq:stabilisation-maps-unordered} and let
\begin{equation}
\label{eq:lifted-stabilisation}
\cdots \to E_n \longrightarrow E_{n+1} \to \cdots
\end{equation}
be another sequence of spaces and maps, equipped with fibrations
\begin{equation}
\label{eq:lifting}
f_n \colon E_n \longrightarrow C_n(M)
\end{equation}
making the evident squares commute. Also choose basepoints $c_n \in C_n(M)$ compatible with the stabilisation maps \eqref{eq:classical-stabilisation}.

\begin{prop}
\label{prop:hs-with-non-local-data}
Fix path-connected spaces $Y$ and $Z$ and suppose that $f_n^{-1}(c_n) = Z^n \times Y$ for all $n$. Fix a basepoint $* \in Z$. Moreover, we assume also that
\begin{itemize}
\item the monodromy $\pi_1(C_n(M)) \to \mathrm{hAut}(Z^n \times Y)$ of \eqref{eq:lifting} is the projection onto the symmetric group followed by the obvious permutation action on the factors of the product $Z^n$;
\item the restriction $Z^n \times Y \to Z^{n+1} \times Y$ of the lifted stabilisation map \eqref{eq:lifted-stabilisation} to fibres over basepoints is the natural inclusion $(z_1,\ldots,z_n,y) \mapsto (*,z_1,\ldots,z_n,y)$.
\end{itemize}
Then the sequence \eqref{eq:lifted-stabilisation} is homologically stable: the map $E_n \to E_{n+1}$ induces isomorphisms on homology in all degrees $\leq n/2 - 1$ with $\bZ$ coefficients and in all degrees $\leq n/2$ with field coefficients.
\end{prop}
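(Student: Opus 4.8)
The plan is to run the Serre spectral sequence of the fibration $f_n$ from \eqref{eq:lifting} and to recognise its $E^2$-page as the \emph{twisted} homology of the configuration space $C_n(M)$ with a \emph{polynomial} coefficient system, thereby reducing the statement to twisted homological stability for configuration spaces \cite{Palmer2018Twistedhomologicalstability,Krannich2019Homologicalstabilitytopological}. First I would set up, for each $n$, the homological Serre spectral sequence
\[
E^2_{p,q}(n) \;=\; H_p\bigl(C_n(M);\,\cH_q(n)\bigr) \;\Longrightarrow\; H_{p+q}(E_n),
\]
where $\cH_q(n)$ is the local coefficient system on $C_n(M)$ with fibre $H_q\bigl(f_n^{-1}(c_n)\bigr) = H_q(Z^n\times Y)$ and monodromy induced by that of $f_n$; path-connectedness of $Y$ and $Z$ ensures the fibre is connected and the spectral sequence takes this standard form. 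Since the lifted maps \eqref{eq:lifted-stabilisation} cover the stabilisation maps \eqref{eq:classical-stabilisation} and commute with the fibrations, they induce a map of spectral sequences $E^\bullet_{\bullet,\bullet}(n)\to E^\bullet_{\bullet,\bullet}(n+1)$ converging to the map $H_*(E_n)\to H_*(E_{n+1})$ we wish to understand.

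The next, and conceptually central, step is to identify the coefficient systems. By the first hypothesis the monodromy factors through the projection $\pi_1(C_n(M))\twoheadrightarrow \Sigma_n$ and acts by permuting the $Z$-factors (trivially on $Y$). Over a field $\bF$ the K\"unneth theorem gives $H_*(Z^n\times Y;\bF)\cong V^{\otimes n}\otimes W$ with $V=H_*(Z;\bF)$ and $W=H_*(Y;\bF)$, with $\Sigma_n$ permuting the tensor factors $V^{\otimes n}$. The second hypothesis identifies the structure maps $\cH_q(n)\to\cH_q(n+1)$ with insertion of the basepoint class $[*]\in V_0=H_0(Z;\bF)=\bF$ into the first slot, which are precisely the structure maps of the tensor-power coefficient system $n\mapsto V^{\otimes n}\otimes W$. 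Since $Z$ is path-connected, $V_0=\bF$ and only the part $V_{>0}$ contributes positive degree, so in each fixed homological degree $q$ at most $q$ of the $n$ tensor factors can be ``active''; hence $\cH_q$ is a polynomial coefficient system of degree $\leq q$. Over $\bZ$ the same analysis applies once one accounts for the $\mathrm{Tor}$ terms in the K\"unneth short exact sequences, which raise total degree by one and are exactly what causes the loss of a single degree in the integral range.

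It then remains to apply twisted homological stability and assemble the ranges. For a polynomial coefficient system of degree $d$, \cite{Palmer2018Twistedhomologicalstability} gives that $H_p(C_n(M);-)\to H_p(C_{n+1}(M);-)$ is an isomorphism (respectively an epimorphism) in a range of the shape $2p+d\leq n$ (respectively one degree larger). Taking $d=q$, the map $E^2_{p,q}(n)\to E^2_{p,q}(n+1)$ is an isomorphism whenever $2p+q\leq n$; and since $2p+q=p+(p+q)\leq 2(p+q)$, every entry on the antidiagonal of total degree $N$ lies in the isomorphism range as soon as $N\leq n/2$. A standard spectral sequence comparison (a five-lemma argument along the filtration, or the Zeeman comparison theorem) then upgrades this to an isomorphism of abutments $H_N(E_n)\to H_N(E_{n+1})$ in the asserted ranges, $N\leq n/2$ with field coefficients and $N\leq n/2-1$ with $\bZ$ coefficients.

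I expect the main obstacle to be the identification carried out in the second step: verifying that the two hypotheses genuinely assemble the individual local systems $\cH_q(n)$ into a \emph{single} polynomial coefficient system over the category underlying twisted homological stability, with the correct polynomial degree, rather than merely a sequence of unrelated $\Sigma_n$-representations. A secondary delicate point is matching the stability range precisely against the spectral sequence: recovering the sharp constant $n/2$ over a field (rather than $n/2-1$) requires invoking the epimorphism range of \cite{Palmer2018Twistedhomologicalstability} on the neighbouring antidiagonal of total degree $N+1$ in the comparison argument, and the integral case requires careful bookkeeping of the $\mathrm{Tor}$ contributions.
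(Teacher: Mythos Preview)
Your proposal is correct and follows essentially the same route as the paper: run the Serre spectral sequence of $f_n$, identify the $E^2$-page as twisted homology of $C_n(M)$ with the coefficient system $n\mapsto H_q(Z^n\times Y)$, check via the two hypotheses that this is the finite-degree system of \cite[\S4]{Palmer2018Twistedhomologicalstability} (degree $\leq q$), apply Theorem~\ref{thm:twisted-homological-stability} to get an $E^2$-isomorphism for $p+q\leq n/2$, and conclude by spectral sequence comparison.

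The one substantive difference is your treatment of integral coefficients. The paper does \emph{not} chase Tor terms in the integral K\"unneth formula; instead it proves the field-coefficient statement first and then deduces the integral statement by the standard Bockstein-type argument using the short exact sequences
\[
0 \to \bZ/p^n \to \bZ/p^{n+1} \to \bZ/p \to 0
\qquad\text{and}\qquad
0 \to \bZ \to \bQ \to \bQ/\bZ \to 0,
\]
together with the decomposition of $\bQ/\bZ$ into $p$-primary pieces. This is cleaner and explains exactly why one degree is lost integrally, without having to verify that the integral functor $n\mapsto H_q(Z^n\times Y;\bZ)$ is itself polynomial of degree $\leq q$. Your worry about assembling the $\cH_q(n)$ into a single coefficient system is handled in the paper by Construction~\ref{construction:twisted-coefficient-system} and Lemma~\ref{lem:twisted-coefficient-system}; your worry about the sharp range is absorbed into the spectral sequence comparison, which the paper treats as standard.
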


\begin{eg}
\label{eg:local-data}
One source of examples of fibrations \eqref{eq:lifting} over configuration spaces $C_n(M)$ equipped with lifted stabilisation maps \eqref{eq:lifted-stabilisation} that satisfy the two conditions of Proposition \ref{prop:hs-with-non-local-data} is \emph{configuration spaces with local data}. This means that we choose a fibration $f \colon E \to \bar{M}$ with path-connected fibres, where $M = \mathrm{int}(\bar{M})$, trivialised over a disc $D \subset \partial\bar{M}$. Then we set
\[
E_n = \bigl\lbrace \{ y_1,\ldots,y_n \} \in C_n(E) \bigm| f(y_i) \neq f(y_j) \text{ for } i\neq j \bigr\rbrace ,
\]
the space of unordered configurations in $M$ where each point $x$ of the configuration is equipped with a label $y \in f^{-1}(x)$. In this setting, the space $Z$ is the fibre of $f$ over $* \in D$. The data in this example is ``local'' in the sense that each label is associated to a single point in the configuration.

However, there also exist labelling data \eqref{eq:lifting} and \eqref{eq:lifted-stabilisation}, satisfying the two conditions of Proposition \ref{prop:hs-with-non-local-data}, that do \emph{not} arise in this way. We will call these ``non-local'' data:
\end{eg}

\begin{defn}
A \emph{system of configuration spaces equipped with non-local data} is a choice of \eqref{eq:lifting} and \eqref{eq:lifted-stabilisation} that do not arise as described in Example \ref{eg:local-data} above.
\end{defn}

\begin{rmk}
Proposition \ref{prop:hs-with-non-local-data}, in the setting of configuration spaces with \emph{local} data, is well-known: see \cite[Appendix A]{KupersMiller2018} or \cite[Appendix B]{CanteroPalmer2015}. The point of this section is to observe that it also holds in a more general setting, requiring just the two assumptions of Proposition \ref{prop:hs-with-non-local-data}, which includes also configuration spaces with \emph{non-local} data. We will see in \S\ref{s:stability-monopoles} that asymptotic monopole moduli spaces are examples of configuration spaces with non-local data: this is our key motivating example. We note, on the other hand, that configuration-mapping spaces, considered in \cite{PalmerTillmann2021}, are in general not examples of configuration spaces with non-local data in the sense of Proposition \ref{prop:hs-with-non-local-data}, as the associated monodromy action does not in general factor through the symmetric group. See \cite[\S 9]{PalmerTillmann2022Pointpushingactions} for a detailed study of the monodromy action for configuration-mapping spaces.
\end{rmk}

In order to prove Proposition \ref{prop:hs-with-non-local-data}, we first need to recapitulate some definitions and results from \cite{Palmer2018Twistedhomologicalstability}. Recall that we are considering a connected manifold $\mbar$ with non-empty boundary whose interior we denote by $M$. Associated to this manifold there is a certain category $\cB(M)$, the \emph{partial braid category on $M$}, whose objects are non-negative integers $\{0,1,2,\ldots\}$ and whose morphisms are ``partial braids'' in $M \times [0,1]$; the precise definition is given in \cite[\S 2.3]{Palmer2018Twistedhomologicalstability}.\footnote{In \cite{Palmer2018Twistedhomologicalstability}, the theory is developed more generally for configuration spaces with (local) labels in a space $X$. We will not need this level of generality here, so we will suppress it (equivalently, we take $X$ to be the one-point space).} This category comes equipped with an endofunctor $S$ that acts by $+1$ on objects as well as a natural transformation $\iota \colon \mathrm{id} \Rightarrow S$. 

\begin{defn}[{\cite[Definitions 2.2 and 3.1]{Palmer2018Twistedhomologicalstability}}]
\label{def:twisted-coefficient-system}
A \emph{twisted coefficient system} for the sequence \eqref{eq:classical-stabilisation} of unordered configuration spaces on $M$, defined over a ring $R$, is a functor $T \colon \cB(M) \to \rmod{R}$. The \emph{degree} of a twisted coefficient system $T$, taking values in $\{-1,0,1,2,3,\ldots\} \cup \{\infty\}$, is defined recursively by setting $\mathrm{deg}(0) = -1$ and declaring that $\mathrm{deg}(T) \leq d$ if and only if $\mathrm{deg}(\Delta T) \leq d-1$, where $\Delta T$ is the cokernel of the natural transformation $T\iota \colon T \Rightarrow TS$.
\end{defn}

\begin{rmk}
In \cite{Palmer2018Twistedhomologicalstability}, the ground ring $R$ is always assumed to be $\bZ$, but everything generalises directly to an arbitrary ground ring $R$.
\end{rmk}

For any twisted coefficient system $T$, the morphisms $\iota_n \colon n \to Sn = n+1$, which between them constitute the natural transformation $\iota$, induce homomorphisms $T(n) \to T(n+1)$. Together with the stabilisation maps \eqref{eq:classical-stabilisation}, these induce homomorphisms
\begin{equation}
\label{eq:map-of-twisted-homology-groups}
H_*(C_n(M);T(n)) \longrightarrow H_*(C_{n+1}(M);T(n+1))
\end{equation}
of twisted homology groups. The main result of \cite{Palmer2018Twistedhomologicalstability} is the following.

\begin{thm}[{\cite[Theorem A]{Palmer2018Twistedhomologicalstability}}]
\label{thm:twisted-homological-stability}
If $T$ is a twisted coefficient system for \eqref{eq:classical-stabilisation} of degree $d$, then the map of twisted homology groups \eqref{eq:map-of-twisted-homology-groups} is an isomorphism in the range of degrees $* \leq \frac12(n-d)$.
\end{thm}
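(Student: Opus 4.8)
The plan is to prove the theorem by a double induction — on the degree $d$ of the twisted coefficient system (outer) and, for fixed $d$, on the homological degree (inner) — using a highly-connected semi-simplicial resolution of $C_n(M)$ together with the degree-lowering property of the difference functor $\Delta$. The base case $d=-1$ is immediate, since then $T=0$ by Definition \ref{def:twisted-coefficient-system}. For the inductive step the central geometric input is a resolution: I would construct an augmented semi-simplicial space $W_\bullet(n) \to C_n(M)$ whose space of $p$-simplices $W_p(n)$ parametrises configurations in $M$ together with an ordered collection of $p+1$ auxiliary points each joined to the chosen boundary component by disjoint embedded arcs (equivalently, configurations with an ordered set of points that can be pushed out through the collar of Definition \ref{def:stabilisation-map}). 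The first and essential step is to show that the augmentation $\lVert W_\bullet(n) \rVert \to C_n(M)$ is $c(n)$-connected with $c(n)$ growing like $n/2$; this is the geometric heart of the argument and follows from the high connectivity of the associated arc complexes.

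The second step is to identify the pieces of the resolution functorially. Each space of $p$-simplices deformation retracts onto $C_{n-p-1}(M)$ once the $p+1$ marked points are absorbed into the boundary collar, and — crucially — the $p+1$ face maps are induced by morphisms in the partial braid category $\cB(M)$. Consequently, for a twisted coefficient system $T$ the local system obtained by restricting $T(n)$ along the resolution is exactly the value of $T$ transported by these category morphisms, so the semi-simplicial structure is compatible with the functoriality defining $T$ as a functor $\cB(M) \to \rmod{R}$. This is what allows the twisted coefficients to be carried through the resolution without ever leaving the framework of $\cB(M)$.

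The third step is the spectral sequence. The augmented semi-simplicial space with coefficients in $T(n)$ yields a first-quadrant spectral sequence whose $E^1$-page has the form $E^1_{p,q} = H_q(C_{n-p-1}(M); T(n)|_p)$ and which, by the connectivity established in step one, converges to $0$ in total degrees below $c(n)$. The $d^1$-differential is the usual alternating sum of the face maps; using step two, all but the outermost face factor through lower stabilisations, and the combinatorics of $\cB(M)$ identify the homology of the complex formed by the $E^1$-terms with twisted homology for the difference functor $\Delta T$. Since $\mathrm{deg}(\Delta T) = d-1$, the inductive hypothesis supplies stability for $\Delta T$ in the sharper range $* \leq \tfrac12(n-d+1)$.

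Finally I would assemble the induction. Comparing the resolutions for $n$ and $n+1$ produces a map of spectral sequences whose abutment vanishes in total degrees below the connectivity bound; the resulting $E^\infty$-relations, combined with the inductive stability of $T$ at smaller $n$ in lower homological degree and of $\Delta T$ one step lower in the degree filtration, force the stabilisation map $H_*(C_n(M);T(n)) \to H_*(C_{n+1}(M);T(n+1))$ of \eqref{eq:map-of-twisted-homology-groups} to be an isomorphism in degrees $* \leq \tfrac12(n-d)$. The main obstacle is twofold: proving the sharp connectivity estimate with slope $1/2$ in the first step, and the bookkeeping in the last step needed to propagate exactly the range $* \leq \tfrac12(n-d)$ rather than a weaker slope — the interaction between the connectivity constant, the degree $d$, and the inductive homological range is delicate, and it is precisely this interaction that pins down the optimal stability slope.
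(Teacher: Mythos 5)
First, a point of comparison: this paper does not prove Theorem \ref{thm:twisted-homological-stability} at all --- it is imported as a black box from \cite[Theorem A]{Palmer2018Twistedhomologicalstability} --- so the only meaningful comparison is with the proof in that reference. Your outline (a highly-connected semi-simplicial resolution by configurations with tethered points, plus a double induction on $\mathrm{deg}(T)$ and homological degree) is the Randal-Williams--Wahl-style argument, realised for configuration spaces of manifolds by \cite{Krannich2019Homologicalstabilitytopological}. The cited source argues differently: Palmer deduces the twisted statement from classical \emph{untwisted} stability for labelled configuration spaces by induction on the degree of $T$, working with the natural transformation $\iota \colon T \Rightarrow TS$ and the difference functor $\Delta T$ at the level of functors on $\cB(M)$, together with Segal--McDuff-type transfer splittings (which is why that Theorem A also gives split-injectivity in \emph{all} degrees, a feature this paper explicitly uses and which your resolution argument would not produce).

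The genuine gap is in your final step: you assert, without a mechanism, that the bookkeeping can be made to yield the sharp range $* \leq \tfrac{1}{2}(n-d)$. The evidence is against this: the realised version of your strategy is Krannich's Theorem D, and, as this paper itself remarks after the proof of Proposition \ref{prop:hs-with-non-local-data}, substituting it gives a range one degree \emph{smaller} than Palmer's. The loss occurs exactly where you wave your hands in step three: the local systems on the $p$-simplices of the resolution are not literally $\Delta T$ but $T$ evaluated after a $(p+1)$-fold shift, and converting the $E^1$-complex into twisted homology with $\Delta T$ coefficients costs a constant in the inductive range. So as written your sketch is a plausible proof of a slightly weaker theorem (which would still suffice for this paper's applications, with ranges reduced by one), but not of the stated sharp bound, and it is not the route taken in the cited source.
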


An important family of examples of finite-degree twisted coefficient systems are defined on the category $\mathrm{FI}\sharp$,\footnote{This is denoted $\Sigma$ in \cite{Palmer2018Twistedhomologicalstability}, but we use the more common notation $\mathrm{FI}\sharp$.} which is the category whose objects are non-negative integers and whose morphisms from $m$ to $n$ are the partially-defined injections from $\{1,\ldots,m\}$ to $\{1,\ldots,n\}$. For any manifold $M$, there is a canonical functor $f_M \colon \cB(M) \to \mathrm{FI}\sharp$, so any functor $\mathrm{FI}\sharp \to \rmod{R}$ determines a twisted coefficient system for any manifold $M$.

\begin{construction}[{A generalisation of \cite[Example 4.1]{Palmer2018Twistedhomologicalstability}}]
\label{construction:twisted-coefficient-system}
Choose path-connected spaces $Y,Z$ and a basepoint $* \in Z$. Also choose an integer $q\geq 0$ and a field $K$. There is a functor
\begin{equation}
\label{eq:functor-on-FIsharp}
T_{Z,Y,q,F} \colon \mathrm{FI}\sharp \longrightarrow \rmod{K}
\end{equation}
that acts on objects by $n \mapsto H_q(Z^n \times Y ; K)$ and, on morphisms, sends each partially-defined injection $j \colon \{1,\ldots,m\} \dashrightarrow \{1,\ldots,n\}$ to the map on homology induced by the map $Z^m \times Y \to Z^n \times Y$ defined by $(z_1,\ldots,z_m,y) \mapsto (z_{j^{-1}(1)},\ldots,z_{j^{-1}(n)},y)$. Notice that $j^{-1}(i)$ is either a single element or empty; for the latter case, we interpret $z_\varnothing$ to mean the basepoint $*$ of $Z$.
\end{construction}

\begin{lem}
\label{lem:twisted-coefficient-system}
For any manifold $M$, the twisted coefficient system
\begin{equation}
\label{eq:functor-on-FIsharp-composed-with-fM}
T_{Z,Y,q,F} \circ f_M \colon \cB(M) \longrightarrow \rmod{K}
\end{equation}
given by composing \eqref{eq:functor-on-FIsharp} with the canonical functor $f_M \colon \cB(M) \to \mathrm{FI}\sharp$ has degree at most $q$.
\end{lem}
\begin{proof}
When $Y$ is the one-point space, this is \cite[Lemma 4.2]{Palmer2018Twistedhomologicalstability}. The extra factor of $Y$ in the product does not affect the proof at all (as long as $Y$ is path-connected), so the proof of the general case is identical to that of \cite[Lemma 4.2]{Palmer2018Twistedhomologicalstability}.
\end{proof}

This completes our recapitulation of the necessary definitions and results of \cite{Palmer2018Twistedhomologicalstability}, and we may now complete the proof of Proposition \ref{prop:hs-with-non-local-data}.

\begin{proof}[Proof of Proposition \ref{prop:hs-with-non-local-data}]
We will take field coefficients and prove homological stability up to degree $n/2$. This will automatically imply homological stability up to degree $n/2 - 1$ with integral coefficients (and hence any untwisted coefficients), via the short exact sequences of coefficients
\[
1 \to \bZ/(p^n) \longrightarrow \bZ/(p^{n+1}) \longrightarrow \bZ/(p) \to 1 \qquad\text{and}\qquad 1 \to \bZ \longrightarrow \bQ \longrightarrow \bQ/\bZ \to 1
\]
and the fact that $\bQ/\bZ$ decomposes into the direct sum of $\mathrm{colim}_n(\bZ/(p^n))$ over all primes $p$.

We therefore consider the Serre spectral sequence, with coefficients in a field $K$, associated to the fibration \eqref{eq:lifting} and the map of Serre spectral sequences induced by the stabilisation maps downstairs \eqref{eq:classical-stabilisation} and upstairs \eqref{eq:lifted-stabilisation}. The map of $E^2$ pages is of the form
\begin{equation}
\label{eq:map-of-E2-pages-of-Serre-spectral-sequence}
H_p(C_n(M);H_q(Z^n \times Y ; K)) \longrightarrow H_p(C_{n+1}(M);H_q(Z^{n+1} \times Y ; K)).
\end{equation}
The first assumption of the proposition implies that the local coefficients appearing in the source and target of \eqref{eq:map-of-E2-pages-of-Serre-spectral-sequence} are precisely those arising from the twisted coefficient system \eqref{eq:functor-on-FIsharp-composed-with-fM}. The second assumption implies that the map \eqref{eq:map-of-E2-pages-of-Serre-spectral-sequence} is precisely the one induced by the stabilisation maps \eqref{eq:classical-stabilisation} together with the morphisms $+1 \colon n \to n+1$ of $\mathrm{FI}\sharp$; thus it is the map \eqref{eq:map-of-twisted-homology-groups} for $T = \eqref{eq:functor-on-FIsharp-composed-with-fM}$. By Lemma \ref{lem:twisted-coefficient-system}, this twisted coefficient system has degree at most $q$. Hence Theorem \ref{thm:twisted-homological-stability} implies that \eqref{eq:map-of-E2-pages-of-Serre-spectral-sequence} is an isomorphism for all $p \leq \tfrac12(n - q)$, in particular for all $p+q \leq n/2$. A spectral sequence comparison argument then implies that the map on $H_*(-;K)$ induced by $E_n \to E_{n+1}$ is an isomorphism in degrees $* \leq n/2$.
\end{proof}

\begin{rmk}
One may prove Proposition \ref{prop:hs-with-non-local-data} using the twisted homological stability result \cite[Theorem D]{Krannich2019Homologicalstabilitytopological} instead of the twisted homological stability result \cite[Theorem A]{Palmer2018Twistedhomologicalstability}, although this results in a range of degrees one smaller, namely $n/2 - 1$ for field coefficients and $n/2 - 2$ for integral coefficients.
\end{rmk}

\begin{rmk}
The map \eqref{eq:map-of-E2-pages-of-Serre-spectral-sequence} of $E^2$ pages of Serre spectral sequences is \emph{split-injective in all degrees} by \cite[Theorem A]{Palmer2018Twistedhomologicalstability}. However, this does not in general imply split-injectivity in the limit, so we cannot deduce from this that $E_n \to E_{n+1}$ induces split-injections on homology. Anticipating Remark \ref{rmk:forgetful-maps-not-lifting}, there are obstructions to proving split-injectivity on homology for configurations with non-local data, in contrast to the case of ordinary configurations and twisted homology.
\end{rmk}

\section{Homology stability for asymptotic monopole moduli spaces}
\label{s:stability-monopoles}

Fix a positive integer $c$ and a tuple $\lambda = (k_1,\ldots,k_r)$ of positive integers that sum to $k$. Denote by $\lambda[n]_c$ the tuple $(k_1,\ldots,k_r,c,\ldots,c)$, where there are $n$ appearances of $c$. For simplicity we will assume that $k_i \neq c$ for each $i$ (if this is not the case we may simply remove these entries from $\lambda$ and increase $n$ appropriately). Our main theorem is the following.

\begin{thm}
\label{thm:stability}
There are natural stabilisation maps
\begin{equation}
\label{eq:stab-maps-lifted}
\cT_{\lambda[n]_c} \longrightarrow \cT_{\lambda[n+1]_c} \qquad\text{and}\qquad \cI_{\lambda[n]_c} \longrightarrow \cI_{\lambda[n+1]_c}
\end{equation}
that induce isomorphisms on homology in all degrees $\leq n/2 - 1$ with $\bZ$ coefficients and in all degrees $\leq n/2$ with field coefficients.
\end{thm}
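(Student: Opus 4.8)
The plan is to deduce both stability statements from Proposition~\ref{prop:hs-with-non-local-data} by exhibiting, for each family, a fibration over a base that is independent of $n$ whose fibre is a configuration space with non-local data of exactly the type handled by that proposition. Write $\mu = \lambda[n]_c$. Since $k_i \neq c$ for all $i$, the stabiliser $\Sigma_\mu \leq \Sigma_{r+n}$ splits as $\Sigma_\lambda \times \Sigma_n$, where $\Sigma_\lambda \leq \Sigma_r$ fixes $(k_1,\dots,k_r)$ and $\Sigma_n$ permutes the $n$ entries equal to $c$. Forgetting the charge-$c$ points defines a fibre bundle $F_{r+n}(\bR^3)/(\Sigma_\lambda \times \Sigma_n) \to F_r(\bR^3)/\Sigma_\lambda =: B_\lambda$ whose fibre is $C_n(M)$, where $M = \bR^3 \smallsetminus \{r \text{ points}\}$; crucially $B_\lambda$ does not depend on $n$. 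Composing with the bundle projections of Definitions~\ref{def:ideal} and \ref{def:Gibbons-Manton-config-space} yields fibrations $\cI_\mu \to B_\lambda$ and $\cT_\mu \to B_\lambda$ whose fibres $\Phi_n^{\cI}$ and $\Phi_n^{\cT}$ are the restrictions of $\cI_\mu$ and $\cT_\mu$ over $C_n(M)$.

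The first task is to construct the stabilisation maps. The configuration-space stabilisation map of Definition~\ref{def:stabilisation-map} for $M$, with basepoint on the outer boundary sphere and away from the $r$ punctures, adjoins one charge-$c$ point near the basepoint; I would lift it to $\cI_\mu \to \cI_{\lambda[n+1]_c}$ and $\cT_\mu \to \cT_{\lambda[n+1]_c}$ by adjoining a new circle (resp. monopole) factor, setting the new internal parameter to a basepoint $* \in \cM_c$ (resp. $* \in S^1$) and arranging the pairwise interaction classes $\alpha_{i,\mathrm{new}}$ of the new point to be trivial. These maps cover the identity on $B_\lambda$ and restrict on fibres to the stabilisation map $\Phi_n \to \Phi_{n+1}$ covering $C_n(M) \to C_{n+1}(M)$.

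Next I would verify that $\Phi_n^{\cI} \to C_n(M)$, and likewise $\Phi_n^{\cT}$, satisfies the two hypotheses of Proposition~\ref{prop:hs-with-non-local-data}, with $Z = \cM_c$ and $Y = \cM_{k_1} \times \cdots \times \cM_{k_r}$ (resp. $Z = S^1$ and $Y = T^r$); both are path-connected. Since $M$ is simply connected of dimension $3$, an inductive argument on the Fadell--Neuwirth fibrations shows $F_n(M)$ is simply connected, so $\pi_1(C_n(M)) = \Sigma_n$. The decisive point is the monodromy: the restriction of any principal $S^1$- or $T^r$-bundle to a loop is trivial, because such bundles over $S^1$ are classified by $H^2(S^1;\bZ) = 0$; hence the holonomy of the Gibbons--Manton circle factors around every loop in $C_n(M)$ is homotopically trivial, and the only surviving monodromy is the permutation of the $n$ charge-$c$ clusters coming from $\pi_1(C_n(M)) = \Sigma_n$. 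This is precisely the permutation action on the factors of $Z^n$ required by the first hypothesis, with $Y$ left fixed, while the second hypothesis holds by the construction of the stabilisation map. Proposition~\ref{prop:hs-with-non-local-data} then gives that $\Phi_n \to \Phi_{n+1}$ is a homology isomorphism in degrees $\le n/2 - 1$ integrally and $\le n/2$ with field coefficients.

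Finally I would transfer this from fibres to total spaces by comparing the Serre spectral sequences of the two fibrations over the fixed base $B_\lambda$. The stabilisation is $\Sigma_\lambda$-equivariant, as it leaves the special clusters untouched, so on $E^2$-pages it is $H_p(B_\lambda; -)$ applied to the $\Sigma_\lambda$-equivariant maps $H_q(\Phi_n) \to H_q(\Phi_{n+1})$, which are isomorphisms for $q \le n/2$; thus the map of $E^2$-pages is an isomorphism for all $p$ whenever $q \le n/2$. Since every term in total degree $\le n/2$ has $q$-degree $\le n/2$, and every differential into or out of such a term has both endpoints with $q$-degree $\le n/2$ as well (an outgoing differential from total degree $d$ lands in $q$-degree at most $d-1$, an incoming one comes from $q$-degree at most $d$), a page-by-page five-lemma comparison yields an isomorphism on the abutment in total degrees $\le n/2$ with field coefficients, and the integral range $\le n/2 - 1$ follows by the universal-coefficient argument already used in the proof of Proposition~\ref{prop:hs-with-non-local-data}. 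I expect the main obstacle to be the careful construction of the stabilisation maps on $\cI_\mu$ and $\cT_\mu$ and the verification of the second hypothesis — in particular that the interaction classes of the newly added cluster can genuinely be made to vanish so that the induced map on fibres is the basepoint inclusion — whereas the monodromy computation, which at first sight threatens to couple the clusters through the non-local torus data, collapses to a pure permutation precisely because torus bundles are trivial over a circle.
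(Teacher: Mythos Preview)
Your plan is essentially the paper's proof: fibre over $B_\lambda = F_r(\bR^3)/\Sigma_\lambda$ (the paper writes $C_\lambda(\bR^3)$), apply Proposition~\ref{prop:hs-with-non-local-data} to the fibres $\Phi_n \to C_n(M)$ with $M = \bR^3 \smallsetminus \{r\text{ points}\}$, and finish by a Serre spectral sequence comparison over $B_\lambda$; you streamline slightly by using the $Y$ slot of Proposition~\ref{prop:hs-with-non-local-data} to absorb the $r$ background factors in one pass, whereas the paper first does the case $\lambda = ()$ and then bootstraps. The technical point you correctly flag as the main obstacle --- that the interaction classes of the newly added point can be made to vanish --- is exactly Lemma~\ref{lem:pullback-GM-circle-factors}, proved by observing that under the concrete stabilisation map the new point always has strictly smaller vertical coordinate than every old point, so each $\iota_{i,\mathrm{new}}$ lands in a hemisphere of $S^2$ and is nullhomotopic.

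One caution on the monodromy step: the observation $H^2(S^1;\bZ)=0$ is true but not the right tool, because $\Phi_n \to C_n(M)$ is not a principal torus bundle over loops representing nontrivial permutations --- its structure group already contains $\Sigma_n$. What you actually need is Lemma~\ref{lem:monodromy-of-fibration}: since $F_n(M)$ is simply connected the monodromy of $\Phi_n \to C_n(M)$ is the deck-transformation action of $\Sigma_n$ on the fibre of the pullback $\bigoplus_j S_{\mu,j}|_{F_n(M)}$, and one must then verify (as in Remark~\ref{rmk:action-on-second-cohomology}) that $\sigma_*$ pulls $S_{\mu,j}$ back to $S_{\mu,\sigma^{-1}(j)}$, so that this action permutes the last $n$ factors and fixes the first $r$, hence fixes $Y$. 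Your $H^2$ argument at best handles loops that lift to loops in $F_n(M)$, all of which are nullhomotopic anyway; it does not by itself identify the residual $\Sigma_n$-action with the standard permutation of the $Z$-factors leaving $Y$ untouched.
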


We first prove Theorem \ref{thm:stability} for the Gibbons-Manton configuration spaces $\cT_{\lambda[n]_c}$ in \S\ref{ss:Gibbons-Manton}. We then show in \S\ref{ss:changing-fibre} that homological stability is preserved in general when replacing each circle factor in the torus fibre of $\cT_{\lambda[n]_c}$ with another space that is equipped with a circle action. In particular, we deduce the second part of Theorem \ref{thm:stability}, since moduli spaces of ideal monopoles $\cI_{\lambda[n]_c}$ are special cases of this construction.

\subsection{Gibbons-Manton torus bundles}
\label{ss:Gibbons-Manton}

Recall that the Gibbons-Manton torus bundle $\cT_{\lambda[n]_c}$ has base space $F_{r+n}(\bR^3) / \Sigma_{\lambda[n]_c}$, where $\Sigma_{\lambda[n]_c} = \Sigma_\lambda \times \Sigma_n$. By abuse of notation, we will write
\[
F_{r+n}(\bR^3) / \Sigma_{\lambda[n]_c} =: C_{\lambda,n}(\bR^3).
\]
A point in this space consists of two disjoint configurations in $\bR^3$: one $\lambda$-partitioned configuration of $r$ points and one unordered configuration of $n$ points.

Our first goal in this section is to lift the classical stabilisation maps of configuration spaces
\begin{equation}
\label{eq:stab-maps-classical}
C_{\lambda,n}(\bR^3) \longrightarrow C_{\lambda,n+1}(\bR^3)
\end{equation}
(see Definition \ref{def:stabilisation-map}) to the Gibbons-Manton torus bundles:
\begin{equation}
\label{eq:stab-maps-Gibbons-Manton}
\begin{tikzcd}
\cT_{\lambda[n]_c} \ar[rr,dashed] \ar[d] && \cT_{\lambda[n+1]_c} \ar[d] \\
C_{\lambda,n}(\bR^3) \ar[rr] && C_{\lambda,n+1}(\bR^3).
\end{tikzcd}
\end{equation}

Our second goal is to show that these lifted stabilisation maps satisfy the two hypotheses of Proposition \ref{prop:hs-with-non-local-data}. This will imply homological stability for Gibbons-Manton torus bundles, i.e.~the first part of Theorem~\ref{thm:stability}.

We begin with a lemma about pullbacks of Gibbons-Manton circle factors. To prepare for this, we first choose an explicit concrete model for the stabilisation maps \eqref{eq:stab-maps-classical}; i.e.~we make explicit some of the choices involved in Definition \ref{def:stabilisation-map} in the case $M=\bR^3$. Up to homotopy, this does not make any difference, but it will be convenient for the proof of Lemma~\ref{lem:pullback-GM-circle-factors} below to choose a specific representative of this homotopy class of maps.

\begin{defn}
\label{def:stabilisation-map-R3}
We will in fact replace $\bR^3$ with the open upper half-space $M = \bR^2 \times (0,\infty)$. We may then take $\mbar = \bR^2 \times [0,\infty)$ with the obvious collar neighbourhood, so $\mhat = \bR^2 \times (-1,\infty)$. Take $* = (0,0,0) \in \partial\mbar = \bR^2 \times \{0\}$ as basepoint. With these choices (and identification of $\bR^3$ with $\bR^2 \times (0,\infty)$), the stabilisation map
\begin{equation}
\label{eq:stabilisation-map-Fr}
F_{r-1}(\bR^3) \longrightarrow F_r(\bR^3)
\end{equation}
of Definition \ref{def:stabilisation-map} acts as follows. To a configuration $(x_1,\ldots,x_{r-1})$ in $\bR^2 \times (0,\infty)$, we adjoin the new point $(0,0,-\frac12)$ and then ``push upwards'' the resulting configuration in $\bR^2 \times (-1,\infty)$, i.e., we keep the first two coordinates of all points fixed and modify their third coordinates according to a chosen diffeomorphism $(-1,\infty) \cong (0,\infty)$.
\end{defn}

\begin{lem}
\label{lem:pullback-GM-circle-factors}
Let $\lambda = (k_1,\ldots,k_r)$ for positive integers $k_i$ and write $\lambda' = (k_1,\ldots,k_{r-1})$. Then the pullback of the circle bundle $S_{\lambda,j} \to F_r(\bR^3)$ along the stabilisation map \eqref{eq:stabilisation-map-Fr} is $S_{\lambda',j} \to F_{r-1}(\bR^3)$ if $j\leq r-1$ and a trivial bundle if $j=r$.
\end{lem}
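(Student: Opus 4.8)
The plan is to use that principal $S^1$-bundles over $F_r(\bR^3)$ are classified by $H^2(F_r(\bR^3);\bZ)$ (see the discussion following \eqref{eq:cohomology-basis}), so that computing the pullback of $S_{\lambda,j}$ along the stabilisation map $s \colon F_{r-1}(\bR^3) \to F_r(\bR^3)$ of \eqref{eq:stabilisation-map-Fr} reduces to computing $s^*$ on the classifying class $\sum_{i \neq j} k_i \alpha_{ij}$. Everything therefore comes down to the behaviour of $s^*$ on the generators $\alpha_{ij}$, and I claim precisely two things:
\[
s^* \alpha_{ij} = \alpha_{ij} \quad (1 \le i < j \le r-1), \qquad s^* \alpha_{ir} = 0 = s^* \alpha_{ri} \quad (1 \le i \le r-1).
\]
Granting these, the lemma is immediate: for $j \le r-1$ the term $k_r \alpha_{rj}$ pulls back to $0$ while the remaining terms are unchanged, so $s^* S_{\lambda,j}$ is classified by $\sum_{i \le r-1,\, i \neq j} k_i \alpha_{ij}$, which is exactly the class of $S_{\lambda',j}$; and for $j = r$ every term $k_i \alpha_{ir}$ pulls back to $0$, so $s^* S_{\lambda,r}$ is classified by $0$ and is trivial.

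To establish the first identity I would use the explicit model of Definition \ref{def:stabilisation-map-R3}, writing $s(x_1,\ldots,x_{r-1}) = (\theta(x_1),\ldots,\theta(x_{r-1}),\theta(p_0))$ with $p_0 = (0,0,-\tfrac12)$ and $\theta(x) = (x^{(1)},x^{(2)},g(x^{(3)}))$ for an increasing diffeomorphism $g \colon (-1,\infty) \to (0,\infty)$. Since $\alpha_{ij}$ is pulled back from a generator of $H^2(S^2;\bZ)$ along the Gauss map $\iota_{ij}$, I compute $\iota_{ij} \circ s$ for $i,j \le r-1$: it records the direction of $\theta(x_i) - \theta(x_j)$. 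The first two coordinates of this vector agree with those of $x_i - x_j$, while monotonicity of $g$ forces $g(x_i^{(3)}) - g(x_j^{(3)})$ to share the sign of $x_i^{(3)} - x_j^{(3)}$; hence the straight-line homotopy between $\theta(x_i) - \theta(x_j)$ and $x_i - x_j$ avoids the origin throughout, yielding $\iota_{ij} \circ s \simeq \iota_{ij}$ and thus $s^* \alpha_{ij} = \alpha_{ij}$.

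The main obstacle is the second identity, and this is exactly where the concrete model is needed. For $i \le r-1$, the composite $\iota_{ir} \circ s$ records the direction of $\theta(x_i) - \theta(p_0)$, where $\theta(p_0) = (0,0,g(-\tfrac12))$ is a fixed point. Because every $x_i$ lies in $\bR^2 \times (0,\infty)$ and $g$ is increasing, we have $g(x_i^{(3)}) > g(0) > g(-\tfrac12)$, so the vector $\theta(x_i) - \theta(p_0)$ has strictly positive third coordinate for \emph{every} configuration. Its normalisation therefore lands in the open upper hemisphere of $S^2$, which is contractible, so $\iota_{ir} \circ s$ is null-homotopic and $s^* \alpha_{ir} = 0$; the same argument applied to $\iota_{ri}$ (equivalently, via $\alpha_{ri} = -\alpha_{ir}$) gives $s^* \alpha_{ri} = 0$. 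Assembling the two identities as in the first paragraph then completes the proof. The one point that genuinely requires care is the strict inequality placing the adjoined point below the rest of the configuration after pushing upwards; this is precisely why we fixed the upper-half-space model of Definition \ref{def:stabilisation-map-R3} rather than arguing only up to homotopy.
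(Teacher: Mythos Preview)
Your proposal is correct and follows essentially the same approach as the paper: reduce to the effect of the stabilisation map on the classes $\alpha_{ij}$, and use the explicit upper-half-space model to see that whenever the new point $x_r$ is involved the Gauss map lands in a single hemisphere and is therefore nullhomotopic. The only notable difference is that you explicitly supply the straight-line homotopy verifying $\iota_{ij}\circ s \simeq \iota_{ij}$ for $i,j\le r-1$, whereas the paper leaves this step implicit (recording it only afterwards in Remark~\ref{rmk:action-on-second-cohomology}); this extra care is welcome but does not change the strategy.
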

\begin{proof}
Recall that the bundle $S_{\lambda,j} \to F_r(\bR^3)$ is the pullback of the universal $S^1$-bundle on $\mathbb CP^\infty$ along the map $F_r(\bR^3) \to \bC P^\infty$ given by the sum $\sum_{i=1,i\neq j}^r k_i . \iota_{ij}$ where $\iota_{ij} \colon F_r(\bR^3) \to S^2 \subset \bC P^\infty$ is given by
\begin{equation}
\label{eq:iota-ij}
\mathbf{x} = (x_1, \dots, x_r) \longmapsto \frac{x_i-x_j}{\lvert x_i - x_j \rvert}.
\end{equation}
(Recall from Definition \ref{def:stabilisation-map-R3} that we have implicitly replaced $\bR^3$ with $\bR^2 \times (0,\infty)$; the formula above remains true after this replacement.) Its pullback to $F_{r-1}(\bR^3)$ along the stabilisation map \eqref{eq:stabilisation-map-Fr} is therefore given by the same formula, restricting $\iota_{ij}$ to $F_{r-1}(\bR^3)$ along \eqref{eq:stabilisation-map-Fr}.

The key observation is the following. When $i=r$ and we restrict $\iota_{rj}$ to $F_{r-1}(\bR^3)$ along \eqref{eq:stabilisation-map-Fr}, the vertical (third) coordinate of the point $x_r$ will always be smaller than the vertical coordinate of the point $x_j$, due to the choices made in the construction of \eqref{eq:stabilisation-map-Fr} in Definition \ref{def:stabilisation-map-R3}; see Figure~\ref{fig:restriction-of-iota-rj} for a detailed explanation. Thus the right-hand side of \eqref{eq:iota-ij} always takes values in the bottom hemisphere of $S^2 \subset \bC P^\infty$, and hence $\iota_{rj}$ restricted along \eqref{eq:stabilisation-map-Fr} is nullhomotopic. By exactly analogous reasoning, when $j=r$ the map $\iota_{ir}$ restricted along \eqref{eq:stabilisation-map-Fr} takes values in the top hemisphere of $S^2 \subset \bC P^\infty$ and hence is also nullhomotopic.

Putting this all together, we deduce that the map $F_{r-1}(\bR^3) \to \bC P^\infty$ classifying the pullback of $S_{\lambda,r}$ is nullhomotopic, so this pullback is trivial. It also implies that the map $F_{r-1}(\bR^3) \to \bC P^\infty$ classifying the pullback of $S_{\lambda,j}$, for $j\leq r-1$, is the sum $\sum_{i=1,i\neq j}^{r-1} k_i . \iota_{ij}$, which is by definition the map that classifies $S_{\lambda',j}$.
\end{proof}

\begin{figure}
    \centering
    \includegraphics[scale=0.8]{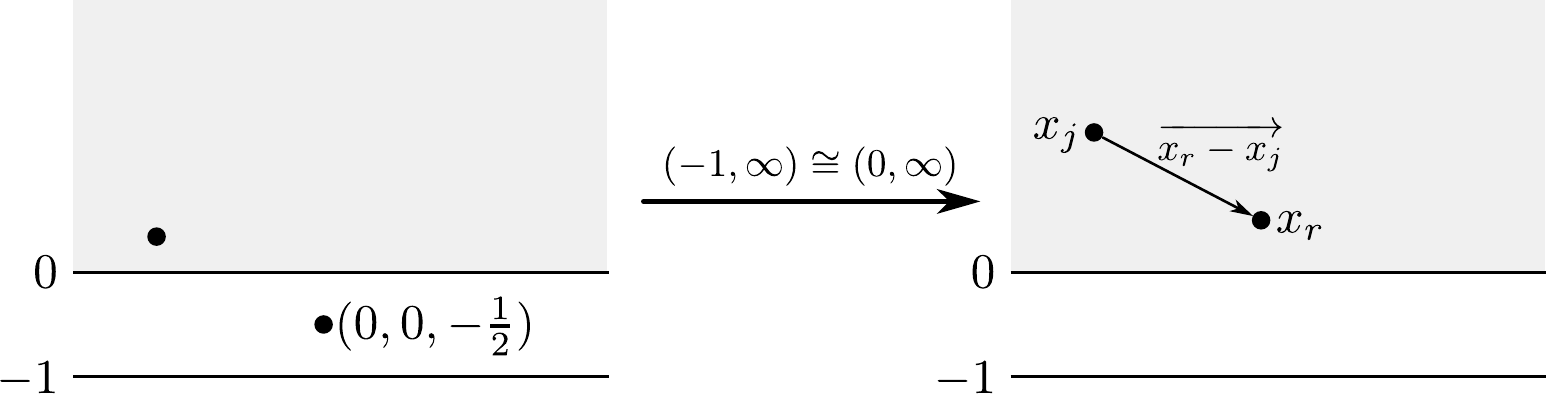}
    \caption{Any configuration in the image of the stabilisation map \eqref{eq:stabilisation-map-Fr} has the form depicted on the right-hand side above (only the points $x_j$ and $x_r$ are actually depicted). Namely, $x_j$ is the image, after applying the chosen diffeomorphism $(-1,\infty) \cong (0,\infty)$ to vertical coordinates, of an arbitrary point in $\bR^2 \times (0,\infty)$, whereas $x_r$ is the image of $(0,0,-\frac12)$. Since the diffeomorphism $(-1,\infty) \cong (0,\infty)$ is order-preserving, the vertical coordinate of $x_j$ is higher than the vertical coordinate of $x_r$. Hence the (normalised) vector from $x_j$ to $x_r$ lies in the bottom hemisphere of $S^2$.}
    \label{fig:restriction-of-iota-rj}
\end{figure}

\begin{rmk}
\label{rmk:action-on-second-cohomology}
Recalling that we denote by $\alpha_{ij}$ the pullback of a fixed generator of $H^2(S^2;\bZ)$ along the map $\iota_{ij} \colon F_r(\bR^3) \to S^2$, the discussion in the proof above implies that the stabilisation map $F_{r-1}(\bR^3) \to F_r(\bR^3)$ acts on $H^2(-;\bZ)$, in the basis \eqref{eq:cohomology-basis}, by $\alpha_{ij} \mapsto \alpha_{ij}$ if $j\leq r-1$ and $\alpha_{ir} \mapsto 0$. It is also easy to see that the automorphism $\sigma_* \colon F_r(\bR^3) \to F_r(\bR^3)$ induced by a permutation $\sigma \in \Sigma_r$ acts on generators of $H^2(F_r(\bR^3);\bZ)$ by $\alpha_{ij} \mapsto \alpha_{\sigma^{-1}(i),\sigma^{-1}(j)}$. It follows from this that the pullback of the circle bundle $S_{\lambda,j}$ along $\sigma_*$ is the circle bundle $S_{\sigma^{-1}(\lambda),\sigma^{-1}(j)}$.
\end{rmk}

\begin{coro}
\label{coro:stab-map-lifts}
The stabilisation map \eqref{eq:stab-maps-classical} lifts to \eqref{eq:stab-maps-Gibbons-Manton}.
\end{coro}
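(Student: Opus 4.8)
The plan is to construct the lift first at the level of the total spaces $\widetilde{\cT}$ over the \emph{ordered} configuration spaces, and only then to descend to the quotients by the symmetric groups. Write $s \colon F_{r+n}(\bR^3) \to F_{r+n+1}(\bR^3)$ for the ordered stabilisation map of Definition \ref{def:stabilisation-map-R3}, which adjoins a new point at the bottom of the configuration; this new point receives the last index $r+n+1$ and weight $c$. The key input is Lemma \ref{lem:pullback-GM-circle-factors} applied to the weight $\lambda[n+1]_c$: since deleting its final entry returns $\lambda[n]_c$, the pullback along $s$ of the circle factor $S_{\lambda[n+1]_c,j}$ is $S_{\lambda[n]_c,j}$ for each $j \leq r+n$, while the pullback of the last factor $S_{\lambda[n+1]_c,r+n+1}$ is trivial. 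Taking the direct sum of circle factors, I obtain a canonical isomorphism of principal $T^{r+n+1}$-bundles over $F_{r+n}(\bR^3)$,
\[
s^* \widetilde{\cT}_{\lambda[n+1]_c} \;\cong\; \widetilde{\cT}_{\lambda[n]_c} \oplus \varepsilon,
\]
where $\varepsilon$ denotes a trivial principal $S^1$-bundle.

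Given this splitting, I would define the lift as the composite
\[
\widetilde{\cT}_{\lambda[n]_c} \longrightarrow s^* \widetilde{\cT}_{\lambda[n+1]_c} \longrightarrow \widetilde{\cT}_{\lambda[n+1]_c},
\]
in which the second arrow is the tautological bundle map over $s$ supplied by the pullback, and the first arrow is the map determined, via the splitting, by the identity on $\widetilde{\cT}_{\lambda[n]_c}$ together with a chosen section of $\varepsilon$. The proof of Lemma \ref{lem:pullback-GM-circle-factors} in fact provides a preferred such section: since the new point lies below all others, each normalised difference vector $\iota_{i,r+n+1}$ points into the top hemisphere of $S^2 \subset \bC P^\infty$, so the classifying map of $\varepsilon$ factors through that hemisphere, and contracting the hemisphere to its north pole trivialises $\varepsilon$. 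This produces the lift at the level of ordered configurations.

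It then remains to verify that this map is equivariant for the inclusion $\Sigma_{\lambda[n]_c} = \Sigma_\lambda \times \Sigma_n \hookrightarrow \Sigma_\lambda \times \Sigma_{n+1} = \Sigma_{\lambda[n+1]_c}$, where $\Sigma_n \hookrightarrow \Sigma_{n+1}$ is the standard inclusion fixing the index $r+n+1$. The $\Sigma_\lambda$-equivariance follows from Remark \ref{rmk:action-on-second-cohomology}, which identifies the pullback of $S_{\mu,j}$ along a permutation $\sigma$ as $S_{\sigma^{-1}(\mu),\sigma^{-1}(j)}$; since $\Sigma_\lambda$ fixes each of $\lambda$, $\lambda[n]_c$ and $\lambda[n+1]_c$ and permutes the indices compatibly, the displayed splitting is $\Sigma_\lambda$-equivariant. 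For $\Sigma_n$, the relevant point is that it fixes the index $r+n+1$ and so acts on $\varepsilon$ merely by permuting the summands $\iota_{i,r+n+1}$ for $i = r+1,\dots,r+n$; as the hemisphere-contraction defining the section of $\varepsilon$ is uniform in $i$, this section is $\Sigma_n$-invariant, making the first arrow $\Sigma_n$-equivariant. Granting equivariance, the lift descends to the desired map $\cT_{\lambda[n]_c} \to \cT_{\lambda[n+1]_c}$ covering \eqref{eq:stab-maps-classical}, exactly as in the construction of the partitioned stabilisation maps \eqref{eq:stabilisation-maps-partitioned} with $G = \Sigma_\lambda \times \Sigma_n$ and $H = \Sigma_\lambda \times \Sigma_{n+1}$, which is precisely \eqref{eq:stab-maps-Gibbons-Manton}.

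The main obstacle is exactly this last equivariance for the trivial factor $\varepsilon$: a priori the lift depends on a trivialisation of $\varepsilon$, and a generic trivialisation need not respect the $\Sigma_n$-action, which would obstruct the descent to the quotient. The resolution is to avoid any arbitrary choice and instead use the canonical, geometrically defined section coming from the explicit model of Definition \ref{def:stabilisation-map-R3} --- namely the fact that all interaction vectors of the newly added point lie in one common contractible hemisphere --- so that permutation-invariance of the section is automatic. I expect the remaining verifications (commutativity of the squares and compatibility of the two arrows of the composite with the $T^{r+n+1}$-actions) to be routine.
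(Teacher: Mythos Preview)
Your proposal is correct and follows essentially the same approach as the paper: use Lemma~\ref{lem:pullback-GM-circle-factors} to identify $s^*\widetilde{\cT}_{\lambda[n+1]_c}$ with $\widetilde{\cT}_{\lambda[n]_c}$ plus a trivial circle factor, include into the direct sum, compose with the tautological pullback map, and then quotient by $\Sigma_\lambda \times \Sigma_n$. The paper simply asserts equivariance where you justify it via the hemisphere section; your extra care on this point is warranted but does not constitute a different argument.
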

\begin{proof}
Let us write $\mu = \lambda[n+1]_c$ and $\mu' = \lambda[n]_c$. Lemma \ref{lem:pullback-GM-circle-factors} then implies that the pullback of the Gibbons-Manton torus bundle $\widetilde{\cT}_\mu = \bigoplus_{j=1}^{r+n+1} S_{\mu,j} \to F_{r+n+1}(\bR^3)$ along the stabilisation map $F_{r+n}(\bR^3) \to F_{r+n+1}(\bR^3)$ is
\[
\bigoplus_{j=1}^{r+n} S_{\mu',j} \oplus \mathrm{tr} = \widetilde{\cT}_{\mu'} \oplus \mathrm{tr} \longrightarrow F_{r+n}(\bR^3),
\]
where $\mathrm{tr}$ denotes the trivial $S^1$-bundle. We therefore have bundle maps
\begin{equation}
\label{eq:construction-of-bundle-maps}
\begin{tikzcd}
\widetilde{\cT}_{\lambda[n]_c} \ar[rr] \ar[d] && \widetilde{\cT}_{\lambda[n]_c} \oplus \mathrm{tr} \ar[d] \ar[rr] && \widetilde{\cT}_{\lambda[n+1]_c} \ar[d] \\
F_{r+n}(\bR^3) \ar[rr,"\mathrm{id}"] && F_{r+n}(\bR^3) \ar[rr] && F_{r+n+1}(\bR^3),
\end{tikzcd}
\end{equation}
where the left-hand square is an inclusion of a direct summand and the right-hand square is a pullback. This is equivariant with respect to the actions of $\Sigma_\lambda \times \Sigma_n$ and $\Sigma_\lambda \times \Sigma_{n+1}$. Quotienting by these actions, we obtain the lifted stabilisation map \eqref{eq:stab-maps-Gibbons-Manton}.
\end{proof}

In order to apply Proposition \ref{prop:hs-with-non-local-data} to prove the first part of Theorem \ref{thm:stability}, we recall the following general fact about mondromy actions of fibrations.

\begin{lem}
\label{lem:monodromy-of-fibration}
Let $p \colon E \to B$ be a fibration over a based, path-connected space $B$ admitting a universal covering $\pi \colon \widetilde{B} \to B$. Write $\widetilde{p} \colon \widetilde{E} \to \widetilde{B}$ for the pullback of $p$ along $\pi$. Let $F$ denote the fibre of $p$ over the basepoint $b_0 \in B$ and note that the fibre of $\widetilde{p}$ over each point in $\pi^{-1}(b_0) \subset \widetilde{B}$ is also canonically identified with $F$. Then the monodromy action $\pi_1(B) \to \mathrm{hAut}(F)$ of $p$ is equal to
\[
\pi_1(B) \cong \mathrm{Aut}(\pi \colon \widetilde{B} \to B) \longrightarrow \mathrm{hAut}(F),
\]
where the left-hand isomorphism is the action by deck transformations and the right-hand map is given by the action on $\widetilde{E} \to \widetilde{B}$ by pullback.
\end{lem}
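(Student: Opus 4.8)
The plan is to prove the lemma by unwinding the definition of the monodromy of a fibration in terms of \emph{fibre transport}, and then to observe that both descriptions in the statement compute the same transport map. The only non-formal ingredient is the compatibility of fibre transport with the pullback of a fibration, which I would isolate as the key step; everything else is bookkeeping.

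First I would recall the construction of the monodromy. For a loop $\gamma$ based at $b_0$, one forms the homotopy $G \colon F \times [0,1] \to B$, $G(x,t) = \gamma(t)$, lifts it through $p$ to a homotopy $\widetilde G \colon F \times [0,1] \to E$ starting at the inclusion $F = p^{-1}(b_0) \hookrightarrow E$ (using the homotopy lifting property of the fibration $p$), and sets $\rho_p([\gamma]) = \widetilde G(-,1) \in \mathrm{hAut}(F)$; this transport map is well defined up to homotopy. Next I would recall the standard identification $\pi_1(B,b_0) \cong \mathrm{Aut}(\pi)$ determined by a chosen lift $\tilde b_0 \in \pi^{-1}(b_0)$: a class $[\gamma]$ corresponds to the unique deck transformation $g$ with $g(\tilde b_0) = \tilde\gamma(1)$, where $\tilde\gamma$ is the lift of $\gamma$ starting at $\tilde b_0$.

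The key step is to compare transport in $E$ with transport in $\widetilde E$. Writing $q_{\tilde b} \colon \widetilde p^{-1}(\tilde b) \xrightarrow{\cong} p^{-1}(\pi(\tilde b))$ for the canonical identification coming from the pullback description $\widetilde E = \{(\tilde b, e) : \pi(\tilde b) = p(e)\}$, I would check that for any path $\tilde\gamma$ in $\widetilde B$ the transport $T_{\tilde\gamma}$ along $\tilde\gamma$ in $\widetilde E$ agrees, under the maps $q$, with the transport $T_{\pi\tilde\gamma}$ along $\pi\circ\tilde\gamma$ in $E$; concretely $q_{\tilde\gamma(1)} \circ T_{\tilde\gamma} \simeq T_{\pi\tilde\gamma} \circ q_{\tilde\gamma(0)}$. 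This is immediate from the lifting construction, since a lift of the relevant homotopy for $\pi\tilde\gamma$ in $E$ pulls back to a lift for $\tilde\gamma$ in $\widetilde E$, and conversely. Because $\widetilde B$ is simply connected and path-connected, transport along a path in $\widetilde B$ depends only on its endpoints up to homotopy, so the fibres of $\widetilde p$ over $\tilde b_0$ and over $g\cdot\tilde b_0$ are canonically identified; the deck action $(\tilde b_0,e)\mapsto(g\cdot\tilde b_0,e)$ on $\widetilde E$ satisfies $q_{g\cdot\tilde b_0}\circ g = q_{\tilde b_0}$, so after composing with this canonical identification and the maps $q_{\tilde b_0}$, $q_{g\cdot\tilde b_0}$ it induces precisely the self-map of $F$ appearing on the right-hand side of the lemma. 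Assembling the pieces: taking $\tilde\gamma$ to be the lift of $\gamma$ from $\tilde b_0$ to $g\cdot\tilde b_0$, the compatibility above identifies $\rho_p([\gamma]) = T_\gamma$ with the transport $T_{\tilde\gamma}$ read through the deck transformation $g$, which is exactly the image of $g$ under the pullback map $\mathrm{Aut}(\pi) \to \mathrm{hAut}(F)$.

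I expect the main obstacle to be purely a matter of conventions rather than substance: pinning things down so that the two maps agree on the nose instead of differing by an inverse (the familiar homomorphism-versus-anti-homomorphism ambiguity in the isomorphism $\pi_1(B,b_0)\cong\mathrm{Aut}(\pi)$ and in the chosen direction of fibre transport), and making the identity $q_{g\cdot\tilde b_0}\circ g = q_{\tilde b_0}$ fully explicit so that the deck action genuinely becomes the identity on the $e$-coordinate before the transport correction is applied. Once the conventions are fixed consistently, the verification is a diagram chase with no further content.
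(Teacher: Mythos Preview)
Your argument is correct, and it is the standard verification of this fact via fibre transport. The paper does not actually give a proof of this lemma: it is stated as a ``general fact about monodromy actions of fibrations'' and used immediately, so there is nothing to compare against. Your caveat about the homomorphism-versus-anti-homomorphism convention is exactly the only place where care is needed.
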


\begin{proof}[Proof of Theorem \ref{thm:stability} for $\cT_{\lambda[n]_c}$.]
We first assume that $\lambda = ()$ and $r=0$, so that $\lambda[n]_c$ is the tuple $(c,c,\ldots,c)$ of $n$ copies of $c\geq 1$. We are now in the setting of Proposition \ref{prop:hs-with-non-local-data} with $\eqref{eq:classical-stabilisation} = \eqref{eq:stab-maps-classical}$, $\eqref{eq:lifted-stabilisation} = \eqref{eq:stab-maps-Gibbons-Manton}$, $\eqref{eq:lifting} = \eqref{eq:GM-bundle-2}$ and $Z = S^1$.\footnote{Proposition \ref{prop:hs-with-non-local-data} requires us to fix a basepoint on $Z = S^1$. This may initially appear problematic, since the circle fibres of the Gibbons-Manton circle factors (Definition \ref{def:GM-bundles}) cannot be given consistent basepoints, since the Gibbons-Manton circle factors do not admit global sections. However, Proposition \ref{prop:hs-with-non-local-data} only requires a choice of basepoint on a \emph{single} fibre, namely the fibre over the base configuration, so this issue does not arise.}

To complete the proof under this assumption, it suffices to check the two hypotheses of Proposition \ref{prop:hs-with-non-local-data}. The first hypothesis says that the monodromy $\pi_1(C_n(\bR^3)) \to \mathrm{hAut}(T^n)$ of the Gibbons-Manton torus bundle \eqref{eq:GM-bundle-2} is the obvious permutation action on the circle factors of the torus $T^n$. To check this property, we use Lemma \ref{lem:monodromy-of-fibration}. In our setting, the universal covering of $C_n(\bR^3)$ is $F_n(\bR^3)$ and the pullback of $\cT_{\lambda[n]_c} \to C_n(\bR^3)$ is $\widetilde{\cT}_{\lambda[n]_c} \to F_n(\bR^3)$. The deck transformation action of $\pi_1(C_n(\bR^3)) \cong \Sigma_n$ sends a loop (permutation) $\sigma$ to the obvious automorphism $\sigma_*$ of the ordered configuration space $F_n(\bR^3)$. By Remark \ref{rmk:action-on-second-cohomology}, the action of $\sigma_*$ by pullback on Gibbons-Manton circle factors sends $S_{\lambda[n]_c,j}$ to $S_{\lambda[n]_c,\sigma^{-1}(j)}$ (here we use the fact that $\lambda[n]_c = (c,c,\ldots,c)$, so $\sigma^{-1}(\lambda[n]_c) = \lambda[n]_c$). Hence $\sigma_*$ simply permutes the different circle factors in the Gibbons-Manton torus bundle; in particular its action on the torus fibre simply permutes the different copies of $S^1$, as required.

The second hypothesis of Proposition \ref{prop:hs-with-non-local-data} says that the restriction of the lifted stabilisation map \eqref{eq:stab-maps-Gibbons-Manton} to the fibres over the basepoints is the natural inclusion $T^n \to T^{n+1}$. This is immediate by construction of the lifted stabilisation map: it is given (before quotienting by symmetric groups and therefore also afterwards) by including into a direct sum with a (trivial) circle bundle and then a pullback of bundles.

Proposition \ref{prop:hs-with-non-local-data} therefore implies that the stabilisation map $\cT_{\lambda[n]_c} \to \cT_{\lambda[n+1]_c}$ induces isomorphisms on homology in all degrees $\leq n/2 - 1$ with integral coefficients and in all degrees $\leq n/2$ with field coefficients, under our assumption that $\lambda = ()$.

To complete the proof of Theorem \ref{thm:stability} for $\cT_{\lambda[n]_c}$ we deduce the general case from the special case $\lambda = ()$ that we have just proven. To do this, we first observe that the constructions and results so far generalise directly to Gibbons-Manton torus bundles with \emph{fixed points}. In this setting, we consider the subspace of the configuration space $C_{\lambda,n}(\bR^3)$ where the $\lambda$-partitioned $r$-point configuration $\mathbf{x}$ is fixed and the unordered $n$-point configuration is free to move in the complement of $\mathbf{x}$; in other words, we consider the fibre of the projection $C_{\lambda,n}(\bR^3) \to C_{\lambda}(\bR^3)$ over $\mathbf{x} \in C_{\lambda}(\bR^3)$. Let us denote this subspace by $C_{\lambda,n}(\bR^3 ; \mathbf{x})$ and consider the restriction of $\cT_{\lambda[n]_c} \to C_{\lambda,n}(\bR^3)$ to $C_{\lambda,n}(\bR^3 ; \mathbf{x})$, which we denote by $\cT_{\lambda[n]_c}|_{\mathbf{x}}$. The difference between this setting and the $\lambda = ()$ setting considered above is that (1) the unordered $n$-point configuration now lies in $\bR^3 \smallsetminus \mathbf{x}$, (2) there are $r$ additional Gibbons-Manton circle factors encoding the pairwise interactions of the fixed points $\mathbf{x}$ with the free points and (3) the $n$ Gibbons-Manton circle factors that encode the pairwise interactions of the $n$ free points with each other are now modified to also take into account their interactions with the fixed points $\mathbf{x}$. The arguments above generalise directly to this setting and prove that restricted stabilisation maps
\begin{equation}
\label{eq:restricted-stab-maps}
\cT_{\lambda[n]_c}|_{\mathbf{x}} \longrightarrow \cT_{\lambda[n+1]_c}|_{\mathbf{x}}
\end{equation}
induce isomorphisms on homology in all degrees $\leq n/2 - 1$ with integral coefficients and in all degrees $\leq n/2$ with field coefficients. To deduce the same for the unrestricted stabilisation maps \eqref{eq:stab-maps-Gibbons-Manton}, we note that $\cT_{\lambda[n]_c}|_{\mathbf{x}}$ is the fibre of the composite fibration
\[
\cT_{\lambda[n]_c} \longrightarrow C_{\lambda,n}(\bR^3) \longrightarrow C_\lambda(\bR^3),
\]
where the second map forgets the unordered $n$-point configuration, consider the map of fibrations
\begin{equation}
\label{eq:map-of-fibrations}
\begin{tikzcd}
\cT_{\lambda[n]_c}|_{\mathbf{x}} \ar[rr,"{\eqref{eq:restricted-stab-maps}}"] \ar[d] && \cT_{\lambda[n+1]_c}|_{\mathbf{x}} \ar[d] \\
\cT_{\lambda[n]_c} \ar[rr,"{\eqref{eq:stab-maps-Gibbons-Manton}}"] \ar[dr] && \cT_{\lambda[n+1]_c} \ar[dl] \\
& C_\lambda(\bR^3) &
\end{tikzcd}
\end{equation}
and apply a spectral sequence comparison argument to the corresponding map of Serre spectral sequences.
\end{proof}

\begin{rmk}
\label{rmk:forgetful-maps-not-lifting}
For unordered configuration spaces, the stabilisation maps $C_n(\bR^3) \to C_{n+1}(\bR^3)$ have the additional property that they are split-injective on homology. This is essentially a consequence of the existence of forgetful maps $F_n(\bR^3) \to F_r(\bR^3)$ at the level of ordered configuration spaces that forget the last $n-r$ points of a configuration. Using these maps, standard techniques using transfer maps (see \cite{McDuff1975Configurationspacesof} or \cite{ManthorpeTillmann2014Tubularconfigurationsequivariant}) imply split-injectivity on homology for stabilisation maps of unordered configuration spaces. We record here the observation that the forgetful maps
\begin{equation}
\label{eq:forgetful-maps}
\tau_{n,r} \colon F_n(\bR^3) \longrightarrow F_r(\bR^3)
\end{equation}
do \emph{not} naturally lift to Gibbons-Manton torus bundles (in contrast to the stabilisation maps, which do lift, by Corollary \ref{coro:stab-map-lifts}). In order to lift $\tau_{n,r}$ to Gibbons-Manton torus bundles $\widetilde{\cT}_{\lambda} \to \widetilde{\cT}_{\lambda|_r}$, where $\lambda = (k_1,\ldots,k_n)$ and $\lambda|_r = (k_1,\ldots,k_r)$, one would like it to be true that the pullback of the circle bundle $S_{\lambda|_r,j}$ along $\tau_{n,r}$ is $S_{\lambda,j}$ --- given this, one would then be able to pre-compose the pullback of $\widetilde{\cT}_{\lambda|_r}$ with the projection of $\widetilde{\cT}_\lambda$ onto a sub-direct-sum. However, this is false. For every $i<j\leq r$, the pullback of the cohomology class $\alpha_{ij}$ along $\tau_{n,r}$ is $\alpha_{ij}$, so we have
\[
\tau_{n,r}^* \left( \sum_{\substack{i=1 \\ i\neq j}}^{r} k_i . \alpha_{ij} \right) = \sum_{\substack{i=1 \\ i\neq j}}^{r} k_i . \alpha_{ij}.
\]
The left-hand side classifies the pullback of $S_{\lambda|_r,j}$ along $\tau_{n,r}$, but the right-hand side classifies $S_{\lambda,j}$ only if $k_{r+1} = \cdots = k_n = 0$, which is impossible since all $k_i$ are assumed positive.

More informally, one could say that the reason why we cannot naturally lift forgetful maps to Gibbons-Manton torus bundles is because of the \emph{non-local} nature of the additional circle parameters: each circle parameter is associated to \emph{all} configurations points simultaneously, since it encodes the pairwise interactions of one of the points with all of the others. Thus there is no well-defined way of forgetting a subset of the configuration points in the presence of these non-local parameters.
\end{rmk}

\subsection{Changing the fibre}
\label{ss:changing-fibre}

For a sequence of spaces $\mathbf{Z} = \{Z_1,Z_2,\ldots\}$, we will consider the family of finite products of the form $Z_\lambda = Z_{k_1} \times \cdots \times Z_{k_r}$ for tuples $\lambda = (k_1,\ldots,k_r)$ of positive integers. If each $Z_i$ is a $G$-space for some topological group $G$, we consider each $Z_\lambda$ as a $G$-space via the diagonal action.

\begin{defn}
Let $\mathbf{Z}$ be a sequence of $S^1$-spaces and let $\lambda = (k_1,\ldots,k_r)$. Let $\widetilde{\cT}_\lambda(\mathbf{Z})$ be the total space of the fibre bundle obtained from the principal $T^r$-bundle $\widetilde{\cT}_\lambda$ by the Borel construction:
\[
\widetilde{\cT}_\lambda(\mathbf{Z}) = \widetilde{\cT}_\lambda \times_{T^r} Z_\lambda \longrightarrow F_r(\bR^3).
\]
We then let $\cT_\lambda(\mathbf{Z}) = \widetilde{\cT}_\lambda(\mathbf{Z}) / \Sigma_\lambda$, where $\Sigma_\lambda$ acts diagonally on $\widetilde{\cT}_\lambda$ and on the finite product $Z_\lambda$. The \emph{Gibbons-Manton $\mathbf{Z}$-bundle} of weight $\lambda$ is the space $\cT_\lambda(\mathbf{Z})$. It is the total space of a fibre bundle
\begin{equation}
\label{eq:GM-Zstar-bundle}
\cT_\lambda(\mathbf{Z}) \longrightarrow F_r(\bR^3) / \Sigma_\lambda
\end{equation}
with fibre $Z_\lambda$.
\end{defn}

In particular, we have $\cI_\lambda = \cT_\lambda(\mathbf{Z})$ for $\mathbf{Z} = \{ \cM_1,\cM_2,\cM_3,\ldots \}$. We now prove:

\begin{thm}
\label{thm:GM-Zstar-stability}
For any sequence $\mathbf{Z} = \{Z_1,Z_2,\ldots\}$ of path-connected $S^1$-spaces, there are natural stabilisation maps
\begin{equation}
\cT_{\lambda[n]_c}(\mathbf{Z}) \longrightarrow \cT_{\lambda[n+1]_c}(\mathbf{Z})
\end{equation}
that induce isomorphisms on homology in all degrees $\leq n/2 - 1$ with $\bZ$ coefficients and in all degrees $\leq n/2$ with field coefficients.
\end{thm}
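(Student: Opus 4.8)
The plan is to follow the proof of Theorem \ref{thm:stability} for the torus bundles $\cT_{\lambda[n]_c}$ essentially verbatim, the point being that the "change of fibre" from $S^1$ to $Z_c$ is exactly what the generality of Proposition \ref{prop:hs-with-non-local-data} is designed to accommodate, since its labelling spaces $Y$ and $Z$ are allowed to be arbitrary path-connected spaces. Concretely I would apply Proposition \ref{prop:hs-with-non-local-data} with $Z = Z_c$ and, in the fixed-points version, with $Y = Z_\lambda = Z_{k_1} \times \cdots \times Z_{k_r}$, so that the fibre $Z_{\lambda[n]_c} = Z_\lambda \times Z_c^n$ over the base configuration is identified with $Z^n \times Y$. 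Both $Z_c$ and $Z_\lambda$ are path-connected (the latter being a finite product of path-connected spaces), so the hypotheses on $Y$ and $Z$ are met.

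First I would construct the stabilisation maps. The lifted stabilisation maps \eqref{eq:stab-maps-Gibbons-Manton} of the underlying torus bundles, produced in Corollary \ref{coro:stab-map-lifts} as the composite of an inclusion of a direct summand with a pullback \eqref{eq:construction-of-bundle-maps}, are $\Sigma_\lambda \times \Sigma_n$-equivariant maps of principal $T^{\bullet}$-bundles. Applying the Borel construction $- \times_{T^{\bullet}} Z_{\bullet}$ functorially and then quotienting by the symmetric groups yields the desired maps $\cT_{\lambda[n]_c}(\mathbf{Z}) \to \cT_{\lambda[n+1]_c}(\mathbf{Z})$ covering the classical stabilisation maps \eqref{eq:stab-maps-classical}.

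Next I would verify the two hypotheses of Proposition \ref{prop:hs-with-non-local-data} in the special case $\lambda = ()$, where the base is $C_n(\bR^3)$, the fibre is $Z_c^n$, and $Y$ is a point. For the monodromy hypothesis I would invoke Lemma \ref{lem:monodromy-of-fibration}: the universal cover of $C_n(\bR^3)$ is $F_n(\bR^3)$, over which the pulled-back bundle is $\widetilde{\cT}_{(c,\ldots,c)} \times_{T^n} Z_c^n$, and the deck transformation $\sigma_*$ permutes the Gibbons--Manton circle factors by Remark \ref{rmk:action-on-second-cohomology}. It therefore permutes the $Z_c$-factors of the fibre, and there is no residual circle twist: the deck transformation is a genuine permutation of factors, and in any case the $S^1$-action on each $Z_c$ factors through the connected group $S^1$ and so is homotopic to the identity in $\mathrm{hAut}(Z_c)$. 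Hence the homotopy monodromy is exactly the permutation action on $Z_c^n$. For the second hypothesis I would use Lemma \ref{lem:pullback-GM-circle-factors}: the newly created circle factor in \eqref{eq:construction-of-bundle-maps} is trivial, so after the Borel construction the stabilisation map adjoins a trivial $Z_c$-factor; choosing a basepoint $* \in Z_c$, its restriction to fibres is the natural inclusion $(z_1,\ldots,z_n) \mapsto (*,z_1,\ldots,z_n)$, as required.

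Finally I would deduce the general case from $\lambda = ()$ exactly as in the proof of Theorem \ref{thm:stability}: pass to the fixed-points version $\cT_{\lambda[n]_c}(\mathbf{Z})|_{\mathbf{x}}$ over $M = \bR^3 \smallsetminus \mathbf{x}$, where Proposition \ref{prop:hs-with-non-local-data} applies with $Y = Z_\lambda$ absorbing the fixed clusters, and then promote the result to the unrestricted bundle via the map of Serre spectral sequences associated to the composite fibration $\cT_{\lambda[n]_c}(\mathbf{Z}) \to C_{\lambda,n}(\bR^3) \to C_\lambda(\bR^3)$, exactly as in \eqref{eq:map-of-fibrations}. Since $\cI_\lambda = \cT_\lambda(\mathbf{Z})$ for $\mathbf{Z} = \{\cM_1,\cM_2,\ldots\}$ and each $\cM_{k_i}$ is path-connected, this simultaneously yields the ideal-monopole case. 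I expect the only genuinely delicate point to be the monodromy verification --- confirming that the homotopy monodromy on $Z_c^n$ is a pure permutation with no leftover $S^1$-twist --- while every other step is a direct transcription of the torus-bundle argument, the fibre $S^1$ being replaced throughout by the path-connected $S^1$-space $Z_c$.
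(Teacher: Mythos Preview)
Your proposal is correct and follows essentially the same approach as the paper's own proof: construct the stabilisation maps by applying the Borel construction functorially to the diagram \eqref{eq:construction-of-bundle-maps}, verify the two hypotheses of Proposition \ref{prop:hs-with-non-local-data} in the case $\lambda=()$ with $Z=Z_c$ by carrying the torus-bundle argument through the Borel construction, and then pass to the general case via the fixed-points version and the spectral sequence comparison of \eqref{eq:map-of-fibrations}. Your extra caution about a possible residual $S^1$-twist in the monodromy is harmless but not needed, since the deck transformation action on $\widetilde{\cT}_{(c,\ldots,c)}$ is a genuine permutation of circle factors and the Borel construction carries this to the corresponding permutation of $Z_c$-factors on the nose.
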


Theorem \ref{thm:stability} corresponds to two special cases of Theorem \ref{thm:GM-Zstar-stability}, namely the sequences $\{S^1,S^1,\ldots\}$ and $\{\cM_1,\cM_2,\ldots\}$ of $S^1$-spaces. It therefore remains only to prove Theorem \ref{thm:GM-Zstar-stability}.

\begin{proof}[Proof of Theorem \ref{thm:GM-Zstar-stability}]
The proof is a direct generalisation of the proof of Theorem \ref{thm:stability} for $\cT_{\lambda[n]_c}$, so we just explain the differences. First of all, the lifts of the stabilisation maps exist by the proof of Corollary \ref{coro:stab-map-lifts}, where we additionally apply the (functorial) Borel construction to the outer square of \eqref{eq:construction-of-bundle-maps} before quotienting by the symmetric group actions.

We begin by assuming that $\lambda = ()$ and $r=0$, so that $\lambda[n]_c = (c,c,\ldots,c)$ where there are $n$ copies of $c\geq 1$. We are therefore in the setting of Proposition \ref{prop:hs-with-non-local-data} with $Z = Z_c$. The two hypotheses of that proposition are satisfied by the same argument as in the proof of Theorem \ref{thm:stability} for $\cT_{\lambda[n]_c}$, together with the evident observation that applying the Borel construction that replaces each circle factor in the fibre with the $S^1$-space $Z_c$ has the effect, on fibres, that permutation maps $T^n \to T^n$ and natural inclusions $T^n \to T^{n+1}$ are sent to the corresponding permutation maps $(Z_c)^n \to (Z_c)^n$ and natural inclusions $(Z_c)^n \to (Z_c)^{n+1}$. Thus Proposition \ref{prop:hs-with-non-local-data} completes the proof in the case $\lambda = ()$.

This generalises to Gibbons-Manton $\mathbf{Z}$-bundles with \emph{fixed points} exactly as for Gibbons-Manton torus bundles with fixed points, and one may then deduce the general case of the theorem from this by a spectral sequence comparison argument applied to the analogue of the diagram \eqref{eq:map-of-fibrations}.
\end{proof}

\enlargethispage{20pt}

\ack{The first author is grateful to Michael Singer for introducing him to asymptotic monopole moduli spaces, and for asking the question of whether they are homologically stable. The first author was partially supported by a grant of the Romanian Ministry of Education and Research, CNCS - UEFISCDI, project number PN-III-P4-ID-PCE-2020-2798, within PNCDI III. The authors are grateful to the anonymous referee for their careful reading of and helpful comments on an earlier version of this paper.}


\vskip2pc

\bibliographystyle{RS}

\bibliography{biblio}

\end{document}